\newtheoremstyle{mythmstyle}{}{}{}{}{\bf}{.}{ }{}
\theoremstyle{plain}
\newtheorem{theorem}{Theorem}
\newtheorem{lemma}{Lemma}
\newtheorem{proposition}{Proposition}
\newtheorem{corollary}{Corollary}
\theoremstyle{definition}
\newtheorem{definition}{Definition}
\newtheorem{step}{Step}
\theoremstyle{mythmstyle}
\newtheorem{example}{Example}
\newtheorem{remark}{Remark}
\definecolor{darkgreen}{rgb}{0.0, 0.3, 0.13}
\newcommand{\virgolette}[1]{``#1''}
\newcommand{\R}{\mathbb{R}}
\newcommand{\N}{\mathbb{N}}
\newcommand{\cC}{\mathcal{C}}
\newcommand{\cF}{\mathcal{F}}
\newcommand{\cD}{\mathcal{D}}
\newcommand{\cK}{\mathcal{K}}
\newcommand{\cY}{\mathcal{Y}}
\newcommand{\cI}{\mathcal{I}}
\newcommand{\cN}{\mathcal{N}}
\newcommand{\cX}{\mathcal{X}}
\newcommand{\cR}{\mathcal{R}}
\newcommand{\cS}{\mathcal{S}}
\newcommand{\cU}{\mathcal{U}}
\newcommand{\cV}{\mathcal{V}}
\newcommand{\cO}{\mathcal{O}}
\newcommand{\cPd}{\mathcal{PD}}
\newcommand{\cPQ}{\mathcal{PQ}}
\newcommand{\cLL}{\mathcal{L}_F}
\newcommand{\cL}{\mathcal{L}}
\newcommand{\cA}{\mathcal{A}}
\newcommand{\ball}{{I\kern -.35em B}}
\newcommand{\dom}{\mathop{\rm dom}\nolimits}
\newcommand{\meas}{\textnormal{meas}}
\DeclareMathOperator{\Mm}{\mathbf{Mm}}
\DeclareMathOperator{\co}{co}
\DeclareMathOperator{\Sym}{Sym}
\DeclareMathOperator{\bd}{bd}
\DeclareMathOperator{\inn}{int}
\DeclarePairedDelimiterX{\inp}[2]{\langle}{\rangle}{#1, #2}
\begin{document}

\title{Piecewise structure of Lyapunov functions and densely checked decrease conditions for hybrid systems}
\author[1]{Matteo Della Rossa\thanks{{\tt mdellaro@laas.fr}}}
\author[2]{Rafal Goebel}
\author[1]{Aneel Tanwani}
\author[1,3]{Luca Zaccarian}
\affil[1]{LAAS-CNRS, University of Toulouse, CNRS, Toulouse, France}
\affil[2]{Department of Statistics and Mathematics, Loyola University, Chicago IL, USA.}
\affil[3]{L.Zaccarian is also with Department of Industrial Engineering, University of Trento, Italy.}

\renewcommand\Authands{ and }
\maketitle

\begin{abstract}
We propose a class of locally Lipschitz functions with piecewise structure for use as Lyapunov functions for hybrid dynamical systems. 
Subject to some regularity of the dynamics, we show that Lyapunov inequalities can be checked only on a dense set and thus we avoid checking
them at points of nondifferentiability of the Lyapunov function. Connections to other classes of locally Lipschitz or piecewise regular 
functions are also discussed and applications to hybrid dynamical systems are included. 
\end{abstract}

\section{Introduction}
{\let\thefootnote\relax\footnotetext{Work supported in part by ANR via grant {\sc HANDY} (number ANR-18-CE40-0010).}}
We study dynamical systems described by a differential inclusion and constrained to a set $\cC\subset\R^n$, that is,
 \begin{equation}\label{eq:DiffIntroduction}
\dot x\in F(x),\;\;\;\;x\in \cC,
\end{equation}
where $F:\R^n\rightrightarrows \R^n$ is a multivalued map. Differential inclusions generalize ordinary differential equations and model a variety of physical phenomena; see~\cite{AubCel84},~\cite{Dei92} and \cite{Smi02} for a formal introduction.
Remarkably, this paradigm is useful in the analysis of two important objects in control theory:
\begin{itemize}[leftmargin=*]
\item A differential equation with inputs $\dot x=f(x,u)$, $u\in U\subset \R^m$, can be modeled as a differential inclusion
\begin{equation}\label{eq:COntrolDiffIncl}
\dot x \in F^U(x):=\{f(x,u)\;\vert\;u\in U\},
\end{equation}
where the parameter $u\in U$ is the control input or an external disturbance.
\item  A differential equation $\dot x=f(x)$ with a discontinuous $f:\R^n \to \R^n$ can be, in a sense, regularized by considering a differential inclusion
\begin{equation}\label{eq:FilippovIntroduction}
\dot x\in F^{\text{Fil}}_f(x),
\end{equation}
where $F^{\text{Fil}}_f:\R^n\rightrightarrows \R^n$ is the so-called Filippov regularization, representing an enlargement of $f$. See~\cite{filippov1988differential} and the survey~\cite{cortes}.
\end{itemize} 
We are particularly interested in differential inclusions constrained to a pre-specified set $\cC\subset \R^n$. For example, in the framework of \emph{hybrid dynamical systems}~\cite{GoebSanf12}, the set $\cC$ represents the so called \emph{flow set}, that is the set where the solutions are allowed to ``flow'' (instead of jumping), as we discuss in what follows.

Lyapunov tools for stability analysis of equilibria, or compact, or just closed sets for differential inclusions have been the subject of extensive research. In particular, in the case $\cC=\R^n$, under some conditions on the map $F$, the existence of \emph{smooth}  Lyapunov functions is necessary and sufficient for asymptotic stability of a compact set, see \cite{CLARKE199869}, \cite{TeelPral00b}, \cite{Lin1996}. For the case where $\cC$ is a closed subset of $\R^n$, a converse  Lyapunov theorem (in the more general hybrid framework) could be found in \cite[Theorem 7.31]{GoebSanf12}. Nevertheless, it often happens that a \emph{nonsmooth} function $V$ may be easier to describe and construct.

As a first and remarkable example, one can consider a \emph{linear} differential inclusion (LDI), that is,
\[
\dot x\in \co\{A_ix\;\vert\;i\in \{1,\dots M\}\},
\]
for some $A_i \in \R^{n\times n}$, $i=1,\dots,M$. In this case, global asymptotic stability of the origin is equivalent to the existence of a \emph{smooth} Lyapunov function that is convex and homogeneous of degree 2, as shown in \cite{dayawansa}, \cite{molcha}. 
Despite this powerful theoretical result, such \emph{smooth} convex functions can be difficult to construct. Various classes of \emph{non-smooth} functions have been proposed to approximate them, for example maxima of quadratic functions and their convex conjugates \cite{goebel2}, \cite{goebel}, or functions with convex polyhedral level sets \cite{molcha}. Such functions are convex and locally Lipschitz but not necessarily differentiable, therefore the right notion of ``\emph{derivative of the Lyapunov function along  solutions}''  is crucial, see the book~\cite{clarke3}, or~\cite{ShePad94} and~\cite{BacCer99} for possible extensions. 

More generally, stability results for differential inclusions relying on nonsmooth Lyapunov functions  appear in \cite{ceragioli}, \cite{TeelPral00a}, \cite{ClarkeNONSmo}. 
In most of these works, the authors use of Clarke generalized gradient to formulate Lyapunov conditions at points where the Lyapunov function $V$ is not differentiable. Among other examples, Clarke generalized gradient is used in \cite{BaiGru12} for piecewise affine Lyapunov functions for state-dependent switching systems and in \cite{LibNes14} for interconnected hybrid systems. This strategy, effective for classical continuous-time nonlinear systems, was suggested in \cite[Chapter 4]{ClarkeNONSmo} and then well summarized in \cite[p.99]{TeelPral00a}.
The main drawback of Clarke generalized gradient condition is that it considers \emph{all} the scalar products between generalized gradients and \emph{all} values of the differential inclusion. Less restrictive conditions for stability of differential inclusions are presented in \cite{BacCer99}, under some regularity assumptions on the (nonsmooth) Lyapunov function.  
A different approach is presented in \cite{GraMic08}, where the nonsmooth function is studied by exploiting regularization-via-convolution techniques. 
One possible way to overcome the limitations of generalized gradients is to propose stability conditions that only need to be checked away from the points where the candidate Lyapunov function is not differentiable.

With this motivation in mind, we introduce in this paper a class of locally Lipschitz functions for which
the Lyapunov inequalities need to be checked only on a dense subset of $\cC$, under further hypotheses on the map $F:\cC\rightrightarrows \R^n$, but without further assumptions on $\cC$. 
The class includes functions that can be built using the pointwise maximum and the pointwise  minimum of continuously differentiable functions (see \cite{dellarossa19} and references therein) but is more general.
When $\cC$ is a closed set (possibly with an empty interior) we show that our conditions are less restrictive than the Clarke  gradient-based conditions. 
We also relate the proposed class to \emph{piecewise continuously differentiable functions}, introduced in \cite{Cha90} and \cite{Sch12} for optimization purposes. 

As the main application, we use the proposed class of Lyapunov functions for sufficient conditions for asymptotic stability in hybrid dynamical systems 
\begin{equation}\label{eq:IntroHybridSystem}
\mathcal{H}:\;
\begin{cases}
\dot x\in F(x),\;\;&\;x\in \cC,\\
x^+\in G(x),\;\;&\;x\in \cD;
\end{cases}
\end{equation}
see the book \cite{GoebSanf12} for the formal definition. 
Collecting our relaxed Lyapunov conditions on the \emph{flow} set $\cC$ with proper Lyapunov-\emph{jump} conditions on the set $\cD$, we propose a stability result that generalizes the standard sufficient Lyapunov conditions for hybrid systems in \cite[Theorem 3.18]{GoebSanf12}. This result allows revisiting a classical example from the reset control literature: the Clegg integrator in feedback with an integrating plant. This example has been shown to overcome intrinsic limitations of linear feedback systems in \cite{BekHol01}. For this example, both \cite{ZacNes11} and \cite{NesTeel2011} provided numerical and analytic nonconvex (and nonsmooth) Lyapunov functions. We give here a pair of new (arguably simpler) nonconvex functions, and also a new convex Lyapunov function.

Preliminary results in the direction of this article have been given in~\cite{DelGoe19}. Here, as compared to~\cite{DelGoe19}, we go further in the analysis of relaxed Lyapunov sufficient conditions for constrained differential inclusions, providing a deeper comparison with the existing literature on locally Lipschitz Lyapunov functions, and exploring the limitations of this approach with respect to the topological properties of the flow set $\cC$.

The paper is structured as follows. In Section~\ref{sec:Prelimi}, we give the basic definitions and recall the main results on locally Lipschitz Lyapunov functions, providing some example to illustrate the limitations of this approach. In Section~\ref{sec:main}, we present our main stability statements, while in Section~\ref{sec:Comparison} we deeply investigate the relations between our results and the existing literature on locally Lipschitz Lyapunov functions. In Section~\ref{sec:discFurth}, we introduce the concept of \emph{global} piecewise functions which simplifies our analysis. Finally, in Section~\ref{sec:HybridSystems} we apply all our previous results in the context of hybrid dynamical systems, presenting, as an example, the Clegg integrator.

\textbf{Notation:} For each $x\in \R^n$, $|x|$ denotes the usual Euclidean norm of $x$, and, given a closed set $\cA\subset\R^n $, $|x|_\cA$ denotes the Euclidean distance to $\cA$, that is $|x|_\cA:=\min_{y\in \cA}|x-y|$. The set of non-negative real numbers is defined by $\R_{\geq 0}:=\{x\in\R\;\vert\;x\geq 0\}$. A function $\alpha:\R\to \R_{\geq 0}$ is \emph{positive definite} ($\alpha\in\cPd$) if it is continuous, $\alpha(0)=0$, and $\alpha(s)>0$ if $ s\neq 0$. A function $\alpha:\R_{\geq 0}\to \R_{\geq 0}$ is of \emph{class $\cK$} ($\alpha \in \cK$) if it is continuous, $\alpha(0)=0$, and strictly increasing. It is of \emph{class $\cK_\infty$} if, in addition, it is unbounded. Given a Lebesgue-measurable set $A\subset \R^n$, with $\meas(A)$ we denote its Lebesgue measure. Given $X\subset\R^n$, $\inn(X)$, $\overline{X}$ and $\bd(X)$ denote respectively the interior, the closure and the boundary of $X$, while $\co(X)$ denotes the convex hull of $X$. Given $x\in \R^n$ and $\delta>0$, $\ball(x,\delta):=\{z\in \R^n\;\vert\;\; |x-z|<\delta\}$ is the open ball of radius $\delta$ centered at $x$.

\section{Everywhere and almost everywhere conditions}\label{sec:Prelimi}

\subsection{Background}

Let $\cC\subset \R^n$ be a set and $F:\cC\rightrightarrows \R^n$ a set-valued map. 
For details on the continuity concepts for $F$, recalled below, see \cite[Chapter 5]{rockafellar}.

\begin{definition}\label{def:Continuity}
\begin{itemize}[leftmargin=*]
\item $F$ is \emph{inner semicontinuous relative to $\cC$} at $x\in\cC$  if 
\[
F(x)\subset\liminf_{y\to^\cC x}F(y):=\{f\,\vert\,\forall x_k\to^\cC x\ \exists f_k\in F(x_k)\, \text{such that}\, f_k\to f\}.
\] (Above, $x_k\to^\cC x$ stands for $x_k\in\cC$ and $x_k\to x$ as $k\to \infty$). The map $F$ is \emph{inner semicontinuous in $\cC$} if it is inner semicontinuous relative to $\cC$ at each $x\in \cC$.
\item  $F$ is \emph{outer semicontinuous relative to $\cC$} at $x\in\cC$ if
\[
F(x)\supset\limsup_{y\to^\cC x}F(y):=\{f\,\vert\,\exists\,x_k\to^\cC x,\;\exists f_k\in F(x_k)\, \text{such that}\, f_k\to f\}.
\]
The map $F$ is \emph{outer semicontinuous in $\cC$} if it is outer semicontinuous relative to $\cC$ at each $x\in \cC$.

\item $F$ is \emph{locally bounded relative to $\cC$} at $x\in \cC$ if for some neighborhood $\cU$ of $x$ the set $F(\cU\cap\cC)\subset \R^n$ is bounded. It is \emph{locally bounded in $\cC$} if this holds at every $x\in \cC$.
\end{itemize}
\end{definition}
As an example, the map $F^U:\R^n\rightrightarrows \R^n$ in~\eqref{eq:COntrolDiffIncl} is outer semicontinuous, inner semicontinuous and locally bounded in $\R^n$ when $f:\R^n\times\R^m\to \R^n$ is continuous and $U\subset \R^m$ is compact. 
The map  $F^{\text{Fil}}_f:\R^n\rightrightarrows \R^n$ is outer semicontinuous and locally bounded in $\R^n$ when 
$f:\R^n\to\R^n$ is merely locally bounded, but $F^{\text{Fil}}_f$ usually fails to be inner semicontinuous at points where $f$ is discontinuous, as we underline in the subsequent Example~\ref{example:flower}.\\


\begin{definition}\label{def:SolDiffInc}
A \emph{solution} to \eqref{eq:DiffIntroduction} is an absolutely continuous function $\phi:\dom \phi:=[0,T_\phi)\to \R^n$, with $T_\phi>0$ (and possibly $T_\phi=+\infty$), such that
\[
\begin{aligned}
&\phi(t)\in \cC, \;\;\;\;\text{for all}\;t\in \inn(\dom \phi), \\
&\dot \phi(t)\in F(\phi(t)),\;\; \text{almost everywhere in}\; \dom \phi.
\end{aligned}
\]
We denote the set of all solutions of \eqref{eq:DiffIntroduction} by $\cS_{F,\cC}$. \hfill$\triangle$
\end{definition}
\begin{definition}\label{def:Stabilityconcept}
A closed set $\cA\subset \R^n$ is said to be \emph{uniformly globally asymptotically stable (UGAS)} 
for \eqref{eq:DiffIntroduction} if it is
\begin{itemize}[leftmargin=*]
\item \emph{uniformly globally stable (UGS)}, that is, there exists $\alpha\in \cK_\infty$ such that any solution $\phi\in\cS_{F,\cC}$ of \eqref{eq:DiffIntroduction} satisfies $|\phi(t)|_\cA\leq \alpha(|\phi(0)|_\cA)$ for all $t\in \dom \phi$;

\item\emph{uniformly globally attractive (UGA)}, that is, for each $\varepsilon >0$ and $r>0$ there exists $T>0$ such that for any solution $\phi\in\cS_{F,\cC}$ of \eqref{eq:DiffIntroduction}  with $|\phi(0)|_\cA\leq r$, if $t\in \dom \phi$ and $t\geq T$, then $|\phi(t)|_\cA\leq \varepsilon$.\hfill$\triangle$ 
  \end{itemize}
  \end{definition}
We underline that we do not require the forward completeness of maximal solutions. For that reason, the uniform global attractivity (UGA) introduced in Definition~\ref{def:Stabilityconcept} is sometimes called uniform global~\emph{pre-}attractivity (UGpA), for example in~\cite{GoebSanf12}, and UGAS is then called uniform global~\emph{pre-}asymptotic stability. Since we are not interested in completeness, we choose to avoid the prefix \emph{-pre}. For a thorough discussion about completeness property for constrained differential inclusions we refer to~\cite[Chapter 4]{AubCel84}. 

A common way to establish that a set $\cA$  is UGAS for system~\eqref{eq:DiffIntroduction} is through a positive definite real-valued function decreasing along the solutions. More precisely, given a closed set $\cA\subset \R^n$ we say that a function $V:\dom V\to \R$, with the domain $\dom V$ of $V$ open, is a \emph{smooth Lyapunov function} for system~\eqref{eq:DiffIntroduction} with respect to $\cA$ if $\cC\subset\dom V\subset \R^n$, $V\in \cC^1(\dom V,\R)$, there exist $\alpha_1,\alpha_2\in \cK_\infty$
such that 
\begin{equation} 
\label{V bounds} 
\alpha_1(|x|_\cA)\leq V(x)\leq \alpha_2(|x|_\cA), \quad \forall x\in \cC,
\end{equation} 
and there exists $\rho\in \cPd$ such that
\begin{equation} 
\label{eq:SmoothIneq}
\inp{\nabla V(x)}{f}\leq -\rho(|x|_\cA), \quad \forall x\in \cC,\forall f\in F(x). 
\end{equation}
It is standard that the existence of a smooth Lyapunov function implies that $\cA$ is UGAS; under some further assumptions on $\cC\subset \R^n$, $\cA\subset \R^n$ and $F:\cC\rightrightarrows \R^n$ the converse is also true, see \cite{TeelPral00b}, \cite{CLARKE199869}, \cite{Lin1996} for the case $\cC=\R^n$, or \cite[Theorem 7.31]{GoebSanf12} in the more general context of hybrid systems.
The main idea behind this paper is to relax the smoothness assumption on the Lyapunov function $V$ and to check the Lyapunov decrease inequality~\eqref{eq:SmoothIneq} only on subsets of the set $\cC$.

\subsection{Clarke Locally Lipschitz Lyapunov Functions}
Let us start by looking at functions that are not continuously differentiable but only locally Lipschitz continuous and by
recalling a standard Lyapunov condition for this case, \cite{ClarkeNONSmo}.  Consider a locally Lipschitz function $V:\dom V \to\R$, with $\dom V\subset\R^n$ open. The \emph{Clarke generalized gradient} at $x\in \dom V$ is the set
\begin{equation}\label{eq:ClarkeGradient}
\partial V(x):= \text{co} \left\{v\in \R^n\;{\Bigg \vert}\begin{aligned} \;  &\exists\, x_k \to x, \;x_k \notin \cN_V, \text{ s.t.} \\ &v=\lim_{k \to \infty} \nabla V(x_k)\end{aligned} \right\},
\end{equation}
where $\cN_V:=\{x\in \dom V \;\vert\;\nabla V(x)\;\text{does not exist}\}$ has zero Lebesgue measure, by Radamacher theorem, see \cite[Theorem~2.5.1]{clarke3}.

Consider a set $\cC\subset\R^n$, a set valued map  $F:\cC\rightrightarrows\R^n$ and a closed set $\cA\subset \R^n$. We say that a locally Lipschitz function $V:\dom V \to\R$,  with $\dom V$ open set satisfying $\dom V\supset \cC$, is a \emph{Clarke locally Lipschitz Lyapunov function} for system~\eqref{eq:DiffIntroduction} with respect to $\cA$ if
there exist $\alpha_1,\alpha_2\in \cK_\infty$ such that \eqref{V bounds} holds and there exists $\rho\in \cPd$ such that
\begin{equation}
\label{eq:ClarkeLyapunov}
\inp{v}{f}\leq -\rho(|x|_\cA), \quad \forall x\in \cC,\;\forall v\in \partial V (x),\;\forall f\in F(x).
\end{equation} 
In \cite[Section 4.5]{ClarkeNONSmo}, it is proven that the existence of a Clarke locally Lipschitz Lyapunov function implies UGAS of $\cA$. Moreover, when the inequality in~\eqref{eq:ClarkeLyapunov} is replaced by $\inp{v}{f}\leq 0$, then we say that $V$ is a Clarke locally Lipschitz \emph{weak} Lyapunov function. Similarly, the existence of a Clarke locally Lipschitz weak Lyapunov function implies UGS of $\cA$.  

We note that checking condition~\eqref{eq:ClarkeLyapunov} requires us to compute the Clarke generalized gradient of $V$ at \emph{each point} in $\cC$. We see here that this condition can be straightforwardly relaxed in some cases, as already noted in \cite{TeelPral00a} for continuous vector fields and differential equations.
\begin{proposition}\label{proposition:AlmEveCond}
Consider an open set $\cO\subset\R^n$, a locally Lipschitz function $V:\cO\to\R$,  an inner semicontinuous and locally bounded set valued map $F:\cO\rightrightarrows\R^n$, a closed set $\cA\subset \R^n$ and a continuous function $\gamma:\R_+\to\R$. If
\begin{equation}\label{eq:almostevery}
\inp{\nabla V(y)}{f}\leq \gamma(|y|_\cA),\;\;\;\forall f\in F(y),\;\forall y\in \cO\setminus\cN_V,
\end{equation}
it holds that
\begin{equation*}
\inp{v}{f}\leq\gamma(|x|_\cA)\;\;\;\forall x\in \cO,\,\forall v\in \partial V(x),\;\forall f\in F(x).
\end{equation*}
\end{proposition}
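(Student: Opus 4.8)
The plan is to fix $x\in\cO$, $f\in F(x)$ and $v\in\partial V(x)$, and to establish $\inp{v}{f}\le\gamma(|x|_\cA)$ by first treating the ``extreme'' generators of the generalized gradient and then extending to convex combinations. If $F(x)=\emptyset$ there is nothing to prove, so assume $f\in F(x)$ is given.

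First I would handle the case in which $v$ is a limit of gradients: suppose there is a sequence $x_k\to x$ with $x_k\notin\cN_V$ and $\nabla V(x_k)\to v$; since $\cO$ is open we may assume $x_k\in\cO$ for all $k$. The crucial step is to invoke inner semicontinuity of $F$ at $x$: because $f\in F(x)\subset\liminf_{y\to^\cO x}F(y)$, the sequence $x_k\to^\cO x$ admits a companion sequence $f_k\in F(x_k)$ with $f_k\to f$. Applying the hypothesis~\eqref{eq:almostevery} at the points $y=x_k$ gives $\inp{\nabla V(x_k)}{f_k}\le\gamma(|x_k|_\cA)$ for every $k$. Letting $k\to\infty$, the left-hand side converges to $\inp{v}{f}$ by continuity of the inner product, and the right-hand side converges to $\gamma(|x|_\cA)$ by continuity of $\gamma$ and of $x\mapsto|x|_\cA$; hence $\inp{v}{f}\le\gamma(|x|_\cA)$.

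Next I would pass to a general $v\in\partial V(x)$. By definition~\eqref{eq:ClarkeGradient}, $\partial V(x)$ is the convex hull of the set $S_x$ of all gradient-limits considered above, so by Carathéodory's theorem $v=\sum_{i=1}^{p}\lambda_i v_i$ with $v_i\in S_x$, $\lambda_i\ge 0$ and $\sum_i\lambda_i=1$. Applying the previous step to each $v_i$ for the \emph{same} $f\in F(x)$ (legitimate, since inner semicontinuity is a statement about every sequence converging to $x$, in particular about the sequence realizing the limit $v_i$) yields $\inp{v_i}{f}\le\gamma(|x|_\cA)$, and therefore $\inp{v}{f}=\sum_{i=1}^{p}\lambda_i\inp{v_i}{f}\le\gamma(|x|_\cA)$, which is the claimed conclusion.

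I do not expect a serious obstacle here; the one point requiring care is that the companion sequence $\{f_k\}$ must be extracted \emph{along the specific sequence} $\{x_k\}$ realizing the gradient limit defining $v$, which is exactly what inner semicontinuity of $F$ provides. Local boundedness of $F$ and local Lipschitzness of $V$ are not used in the limiting argument itself; they only serve to guarantee that $\nabla V$ is bounded near $x$, so that $S_x\neq\emptyset$ is compact and $\partial V(x)=\co(S_x)$ is a well-defined compact set on which Carathéodory's theorem applies.
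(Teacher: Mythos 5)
Your proof is correct and follows essentially the same route as the paper's: pass from the hypothesis at the gradient points $x_k$ to the limit point via the companion sequence $f_k\in F(x_k)$ supplied by inner semicontinuity, then use continuity of the inner product and of $\gamma$. The only difference is that you make explicit the (trivial) extension to convex combinations of gradient limits, which the paper leaves implicit under ``arbitrariness of $v\in\partial V(x)$''.
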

In particular, if $-\gamma\in \cPd$ in~\eqref{eq:almostevery}, then~\eqref{eq:ClarkeLyapunov} holds with $\cC=\cO$ and $\rho:=-\gamma$. But, we don't insist on $-\gamma\in \cPd$ in Proposition~\ref{proposition:AlmEveCond} to cover both $\gamma\equiv 0$ (certifying UGS) and $-\gamma\in \cPd$ (certifying UGAS) as well as a variety of more general decrease/increase conditions. The following proof is simply an adaptation, in the context of inner semicontinuous set-valued maps, of the reasoning presented in~\cite{TeelPral00b} for continuous functions $f:\R^n\to\R^n$.
\begin{proof}[of Proposition~\ref{proposition:AlmEveCond}]
Consider $x\in \cO$, and according to~\eqref{eq:ClarkeGradient}, take any sequence $x_k\to x$, $x_k\in \cO\setminus\cN_V$, such that $\lim_{k\to\infty}\nabla V(x_k)$ exists. Denote $v:=\lim_{k\to\infty}\nabla V(x_k)$ and pick any $f\in F(x)$. By inner semicontinuity and local boundedness of $F$, there exists a sequence $f_k\in F(x_k)$ such that $f_k\to f$. 
By equation~\eqref{eq:almostevery}, and by continuity of $\gamma$ and of the scalar product, we have
\[
\begin{aligned}
 \inp{\nabla V(x_k)}{f_k}&\leq \gamma(|x_k|_\cA),\\
\downarrow \;\;\;\;\;&\;\;\;\;\; \downarrow\\
\;\;\inp{v}{f}\;\;&\leq \gamma(|x|_\cA).
\end{aligned}
\]
The result follows from the arbitrariness of $v\in \partial V(x)$ and $f\in F(x)$.\qed
\end{proof}

Summarizing, when $F$ is inner semicontinuous and locally bounded, it suffices to check the Lyapunov inequality 
\emph{almost everywhere} in the open set $\cO$, that is at points where $\nabla V$ is defined and then the Clarke decrease condition~\eqref{eq:ClarkeLyapunov} holds everywhere in $\cO$. Combining~\cite[Section 4.5]{ClarkeNONSmo} and Proposition~\ref{proposition:AlmEveCond}, it is possible to guarantee UGAS (or UGS) using locally Lipschitz functions $V$ by only certifying the decrease at the points where $V$ is differentiable.

\subsection{Counterexamples: dense sets and non-inner semicontinuous maps}

Given an open set $\cC\subset \R^n$, note that a full measure subset of $\cC$, that is a set $\cS\subset \cC$ such that $\meas(\cC\setminus \cS)=0$, is always a \emph{dense} subset of $\cC$, in the sense that $\overline{\cS}$ contains $\cC$. The converse is not true in general: for example $\cS=\mathbb{Q}\subset \R$ is such that $\overline{\mathbb{Q}}=\R$ but $\meas(\mathbb{Q})=0$. 
One can ask: is it sufficient, for a general locally Lipschitz function, to check Lyapunov decrease inequalities only on a \emph{dense subset} of $\cC$? The answer is ``no'', as illustrated by the following example.

\begin{example}[Checking on a dense set]
The main idea of this example is taken from \cite{Rockafellar81}: 
Consider $\lambda\in (\frac{1}{2},1)$, and a measurable set $M\subset \R_{\geq 0}$ such that, for every non-empty interval $I\subset \R_{\geq 0}$, it holds that
\begin{equation}\label{eq:SplitInterval}
0<\meas(M\cap I)<\meas(I),
\end{equation}
(such sets are called interval-splitting) and, additionally,
\begin{equation}\label{eq:MBound}
\meas(M\cap[0,t])\geq \lambda t, \;\;\;\forall \;t>0.
\end{equation}
The construction of such a set is provided in~\cite[Lemma 2]{2019arXiv191013604D}, see also~\cite{Rudin1983} for the original construction of interval-splitting sets. 
\footnote{
In \cite{2019arXiv191013604D}, interval-splitting sets with the additional property \eqref{eq:MBound} are used to construct locally Lipschitz functions for which the steepest descent / subdifferential flow generated by the Clarke subdifferential has the pathological behavior of generating strictly increasing orbits.
}
Given a set $N\subset \R$, define the characteristic function of $N$ as
\[
\chi_N(s):=\begin{cases}
1,\;\;\;\text{if } s\in N,\\
0,\;\;\;\text{if } s\notin N.
\end{cases}
\]
Consider the function $W:\R_{\geq 0}\to\R$ defined as
\[
W(s):=\int_0^s \chi_M(r)-\chi_{M^c}(r)\, dr,
\]
where $M^c:=\R_{\geq 0}\setminus M$.
Using the same reasoning as in~\cite{Rockafellar81}, it can be proven that $W$ is locally Lipschitz, and~\eqref{eq:SplitInterval} implies that $\partial W(x)=[-1,1]$ for all $x\in \R_{\geq 0}$ and the sets
\[
\begin{aligned}
\cX_1&:=\{x\in \R_{\geq 0}\;\vert\; \nabla W(x)\;\text{exists}\; \wedge \;\nabla W(x)=1\},\\
\cX_{-1}&:=\{x\in \R_{\geq 0}\;\vert\; \nabla W(x)\;\text{exists}\; \wedge \;\nabla W(x)=-1\},
\end{aligned}
\]
are both dense subsets of $\R_{\geq 0}$. 
We prove next that~\eqref{eq:MBound} ensures that the function $W$ satisfies the bounds
\begin{equation}\label{eq:ExampleBounds}
(2\lambda-1)s\leq W(s)\leq s, \;\;\forall\;s\in \R_{\geq 0}.
\end{equation}
The upper bound is straightforward as
\[
W(s)=\meas(M\cap[0,s])-\meas(M^c\cap [0,s])\leq \meas([0,s])=s.
\]
The lower bound is obtained as follows from~\eqref{eq:MBound}:
\[
\begin{aligned}
W(s)&=\meas(M\cap[0,s])-\meas(M^c\cap [0,s])\\&\geq \lambda s-\meas([0,s]\setminus (M\cap[0,s]))\geq
\lambda s-(1-\lambda)s=(2\lambda -1)s.
\end{aligned}
\]
Consider now the differential equation
\[
\dot x=f(x)=x,
\]
and the candidate Lyapunov function $V:\R\to \R$ defined by $V(x):=W(|x|)$. The function $V:\R\to\R$ is locally Lipschitz and, by~\eqref{eq:ExampleBounds}, it is also positive definite and radially unbounded. 
Moreover, it holds that
\[
\nabla V(x)=\begin{cases}-1,\;\;&\forall\;x>0,\;x\in \cX_{-1},\\
+1,\;\;&\forall\; x<0,\;x\in -\cX_{-1}.
\end{cases}
\]
Consequently
\[
\inp{\nabla V(x)}{f(x)}=-|x|,\;\;\forall\,x\in \cS,
\]
where $\cS:=\cX_{-1}\cup-\cX_{-1}\setminus\{0\}$ is a \emph{dense} subset of $\R$ by construction.
In other words, $V$ is a positive definite and radially unbounded locally Lipschitz function for which the Lyapunov decrease inequality~\eqref{eq:SmoothIneq} is satisfied on a dense subset $\cS$ of $\R$.
On the other hand, the equilibrium point $0$ is clearly unstable.\hfill$\triangle$
\end{example} 

In the next section, we present a subclass of locally Lipschitz functions for which it is enough to check the Lyapunov decrease inequality on a dense subset of $\cC$.

Another question of interest in generalizing Proposition~\ref{proposition:AlmEveCond} is whether the inner semicontinuity hypothesis is in general necessary. First of all let us analyze how restrictive this assumption is, and when it is expected/ensured to hold. 
\begin{remark}[Inner Semicontinuity Assumption]
From Definition~\ref{def:Continuity}, all the continuity concepts somehow coincide for single-valued maps. More precisely, given $\cO\subset \R^n$ and $f:\cO\to \R^n$, let us consider the corresponding singleton-valued map  $F^f:\cO\rightrightarrows\R^n$ defined by  $F^f (x):=\{f(x)\}$. Then, given $x\in \cO$, the following conditions are equivalent:
\begin{itemize}
\item $f$ is continuous at $x$;
\item $F^f$ is outer semicontinuous and locally bounded at $x$;
\item $F^f$ is inner semicontinuous at $x$;
\end{itemize}
see for example~\cite[Chapter 1]{AubCel84}. Thus, in Proposition~\ref{proposition:AlmEveCond} and in what follows, when restricting the attention to differential \emph{equations} (or, equivalently, single-valued maps), the inner semicontinuity hypothesis coincides with the (quite usual) continuity condition on the right-hand side. 
With set-valued maps, a non-trivial example of inner (but not outer) semicontinuous map emerges in the following case.  Given a set $ U\subset \R^m$ and a function $f:\R^n\times U\to \R^n$, consider again the set-valued map $F^U:\R^n\to \R^n$ defined by $F^U(x)=\{f(x,u)\;\vert\;u\in U\}$ as in~\eqref{eq:COntrolDiffIncl}. If $f(\cdot, u):\R^n\to \R^n$ is continuous for all $u\in U$, then $F^U$ is inner semicontinuous (in $\R^n$) but in general it is \emph{not} outer semicontinuous, specifically, in the case where $U$ is not a compact subset of $\R^n$, see~\cite[Chapter 2, Proposition 1]{AubCel84}. 
On the other hand, differential inclusions arising from discontinuous differential equations as in~\eqref{eq:FilippovIntroduction} are outer semicontinuous but in general not inner semicontinuous, as we clarify in Example~\ref{example:flower}.\hfill$\triangle$
\end{remark}
In the following example we study the Filippov regularization of a discontinuous differential equation, and we show that the conditions of Proposition~\ref{proposition:AlmEveCond} are not sufficient in establishing UGAS, due to the lack of inner semicontinuity of the considered set-valued map.
\begin{example}[Violating Inner Semicontinuity]\label{example:flower}
Consider the differential inclusion~\eqref{eq:FilippovIntroduction},  with $\cC=\R^2$, and the set-valued map $F^{\text{Fil}}_f:\R^2 \rightrightarrows \R^2$ defined as the Filippov regularization~\cite{filippov1988differential} of the discontinuous linear system
\begin{equation}
\dot x=\begin{cases}
A_1 x,\;\;&\text{if}\;x^\top Q x\geq 0,\\
A_2 x,\;\;&\text{if}\;x^\top Q x< 0
\end{cases}
\end{equation}
where 
\[
A_1:=\begin{bmatrix}
-0.3 & -1 \\ 5 & -0.3
\end{bmatrix},\;A_2:=\begin{bmatrix}
-0.3 & 5 \\ -1 & -0.3
\end{bmatrix},\;Q:=\begin{bmatrix}
1 & 0\\0 & -1
\end{bmatrix}.
\]
By definition of Filippov regularization, defining $\cX_i:=\{x\in \R^2\,\vert\,(-1)^i x^\top Q x<0\}$, we have
\begin{equation}
F^{\text{Fil}}_f(x)=\begin{cases}
\{ A_1 x\}, \;&\text{if} \;x^\top Q x> 0,\;\text{equivalently}\;x\in \cX_1,\\
\{A_2 x\},\;&\text{if}\;x^\top Q x< 0,\;\text{equivalently}\;x\in \cX_2,\\
\co\{A_1 x,A_2 x\},\,\,\;&\text{if}\;x^\top Q x=0.
\end{cases}
\end{equation}
\begin{figure}
\begin{center}
\includegraphics[scale=0.75]{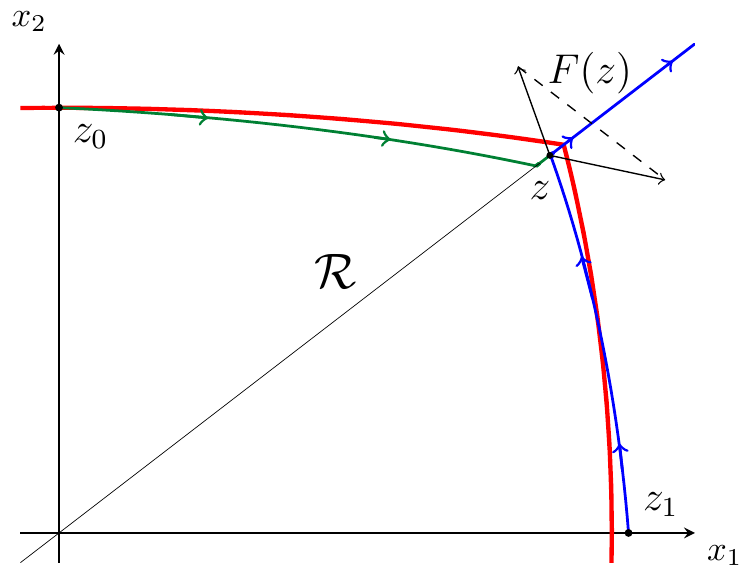}
\caption{Example \ref{example:flower}: In red line a level set of $V$ in \eqref{eq:lyapcounter}, in green and blue the trajectories of two solutions. The solution starting at $z_1$ enters the plotted sublevel set, reaches the point $z\in \mathcal{R}$ and then starts ``sliding'' towards infinity. }
\label{fig:flower}
\end{center}
\end{figure}
By construction~\cite{filippov1988differential}, $F$ is outer semicontinuous on $\R^2$, locally bounded, has compact and convex values, but it is \emph{not} inner semicontinuous at points $x\in \R^2\setminus\{0\}$ satisfying $x^\top Q x=0$.
Consider the locally Lipschitz function
\begin{equation}\label{eq:lyapcounter}
V(x):=\max\left\{x^\top P_1 x, x^\top P_2 x\right \},
\end{equation}
where $P_1:=\begin{bmatrix} 5 & 0 \\0 & 1\end{bmatrix}$ and $P_2:=\begin{bmatrix} 1 & 0 \\0 & 5\end{bmatrix}$.
It is easy to check that
\[
\begin{aligned}
&\bullet \;P_iA_i+A_i^\top P_i<0,\;\;\forall\;i\in \{1,2\},\\
&\bullet\;x^\top Q x> 0 \Leftrightarrow x^\top P_1 x>x^\top P_2 x,\\
&\bullet\;\bd(\cX_1)\cup\bd(\cX_2) =\{x\in \R^2\,\vert\,x^\top Q x=0\}\\
&\bullet\; \meas(\bd(\cX_i))=0,\;\; \text{for }i\in \{1,2\} .
\end{aligned}
\]
As a consequence we have
\[
\inp{\nabla V(x)}{f}<0;\;\forall x \in \R^2\setminus(\bd(\cX_1)\cup\bd(\cX_2)) ,\;\forall f\in F(x),
\]
which implies~\eqref{eq:almostevery}.
On the other hand, condition~\eqref{eq:ClarkeLyapunov} does not hold on $\cR:=\{x\in \R^2\;\vert\; x_1=x_2\}$, and one can see that every  solution of~\eqref{eq:FilippovIntroduction} starting at some $x_0\in \cR$, $x_0 \neq 0$ goes to infinity sliding along $\cR$. In particular, the origin is unstable. See Figure \ref{fig:flower} for a graphical representation.\hfill$\triangle$
\end{example}

\section{Main result}\label{sec:main}
 
Without any topological assumption on the set $\cC\subset\R^n$, we present here a class of locally Lipschitz Lyapunov functions associated to system~\eqref{eq:DiffIntroduction}, for which it suffices to check the Lyapunov inequality on a dense subsed of $\cC$.

\begin{definition}[The classes $\cL(\cC)$ and $\cLL(\cA,\cC)$]
\label{def:nicedefinition}
Consider $\cC\subset\R^n$. A function $V:\dom V\to\R$ (with $\dom V$ open), is a  {\em locally Lipschitz and locally finitely generated function on $\cC$} (and we write $V\in \cL(\cC)$) if $\overline{\cC}\subset\dom V$ and, \\for each $x\in \cC$,
\begin{itemize}
  \item[(a)]  There exists an open neighborhood $\cU(x)\subset \R^n$ such that there exists $L>0$ satisfying $|V(x')-V(x'')|\leq L|x'- x''|$, $\forall x',x''\in \cU(x)\cap \cC$ (i.e., $V$ is locally Lipschitz relative to $\cC$).
  \item[(b)] There exists a set $\cS(x)\subset \cC\cap \cU(x)$, such that $\nabla V(y)$ exists for all $y\in S(x)$, satisfying
\begin{equation}\label{eq:densityOfS}
\overline{\cS(x)}\supset \cC\cap \cU(x)\;\;  (\text{i.e. $\cS(x)$ is dense in $\cC\cap \cU(x)$}).
\end{equation}
\item[(c)] There exists a finite index set $\cI(x)$, and for each $i\in\cI(x)$, there are open sets $\cU_{i}\subset\R^n$ and $\cC^1$ functions $V_{i}:\cU_i\to\R$ such that each $y\in \cS(x)$ satisfies, for some $i\in \cI(x)$, 
 \begin{equation}\label{eq:PiecewiseCondDef}
y\in \cU_i,\;\;\;V(y)=V_{i}(y),\;\;\text{ and } \;\;\nabla V(y)=\nabla V_{i}(y).
 \end{equation}
\end{itemize} 
Considering a map $F:\cC\rightrightarrows \R^n$ and a closed set $\cA\subset\R^n$, we say that $V$ is a {\em locally Lipschitz and locally finitely generated strong Lyapunov  function for $\cA$ on $\cC$} ($V\in \cL_F(\cA,\cC)$) if $V\in \cL(\cC)$  and
\begin{itemize}[leftmargin=0.8cm]
 \item[(L1)] There exist  $\alpha_1,\alpha_2\in \cK_\infty$ such that 
 \begin{equation}
 \alpha_1(|x|_\cA)\leq V(x)\leq\alpha_2(|x|_\cA),  \;\;\;\forall\;x\in \cC.
\end{equation}
\item[(L2)] There exists $\rho\in\cPd$ such that, for each $x\in \cC$, each $y\in \cS(x)$ satisfies~\eqref{eq:PiecewiseCondDef} for some $i\in \cI(x)$ and moreover,
\begin{equation}\label{eq:LyapunovDecrease}
\inp{\nabla V_i(y)}{f}\leq -\rho(|y|_\cA), \;\;\forall \,f\in F(y).
\end{equation}
\end{itemize}
Finally, we say that $V$ is a locally Lipschitz and locally finitely generated  {\em weak} Lyapunov  function for $\cA$ on $\cC$ ($V\in \cL^{\circ}_F(\cA,\cC)$) if all the previous conditions hold with $\rho\equiv 0$ in~(L2).
\hfill$\triangle$
\end{definition} 

We now prove that if $V$ is in $\cLL(\cA,\cC)$ ($V$ is in $\cL^{\circ}_F(\cA,\cC)$, resp.) then the value of $V$ decreases (does not increase, resp.) along the solutions of system~\eqref{eq:DiffIntroduction}.

\begin{theorem} \label{theorem:MainTheo}
If $V\in \cLL(\cA, \cC)$ and $F:\cC\rightrightarrows \R^n$ is locally bounded and inner semicontinuous, then, for every  
$\phi\in\mathcal{S}_{F,\cC}$ and almost every $t\in \dom(\phi):=[0,T_\phi)$,
\begin{equation}\label{eq:flowingcond}
\frac{d}{dt} V(\phi(\cdot))(t)\leq-\rho(|\phi(t)|_A).
\end{equation}
Thus $\cA$ is UGAS for~\eqref{eq:DiffIntroduction}. Similarly, if $V\in \cLL^{\circ}(\cA, \cC)$, then~\eqref{eq:flowingcond} holds with $\rho\equiv0$ and thus $\cA$ is UGS for~\eqref{eq:DiffIntroduction}.
\end{theorem}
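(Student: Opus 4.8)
The plan is to split the claim into two parts: (A) proving the infinitesimal decrease \eqref{eq:flowingcond} along every solution, and (B) deducing UGAS (resp.\ UGS) from it. Part (B) is the classical comparison argument and is the same one behind \cite[Theorem 3.18]{GoebSanf12}: once $V\circ\phi$ is known to be absolutely continuous with $\tfrac{d}{dt}(V\circ\phi)(t)\le-\rho(|\phi(t)|_\cA)\le 0$ for a.e.\ $t$, it is nonincreasing, so (L1) gives $\alpha_1(|\phi(t)|_\cA)\le V(\phi(t))\le V(\phi(0))\le\alpha_2(|\phi(0)|_\cA)$ and hence UGS with $\alpha:=\alpha_1^{-1}\circ\alpha_2\in\cK_\infty$; and when $\rho\in\cPd$, the extra decrease together with UGS yields UGA by the usual estimate (lower-bounding $\rho$ by class-$\cK$ functions on compact annuli around $\cA$ and integrating the differential inequality). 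So essentially all the work is in (A), and the weak case is the same argument with $\rho\equiv0$.

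For (A), first I would fix $\phi\in\cS_{F,\cC}$ and a compact interval $[0,T]\subset\dom\phi$. Since $\phi$ is continuous, $\phi([0,T])$ is a compact subset of $\cC$, so local boundedness of $F$ gives $|\dot\phi(t)|\le M$ a.e.\ and hence $\phi$ is Lipschitz on $[0,T]$. Covering $\phi([0,T])$ by finitely many of the neighborhoods $\cU(x)$ of Definition~\ref{def:nicedefinition}(a), taking $L$ to be the largest associated Lipschitz constant, and recalling that $\phi$ takes values in $\cC$, the composition $V\circ\phi$ is Lipschitz on $[0,T]$; in particular it is absolutely continuous, so at a.e.\ $t\in\inn\dom\phi$ the derivative $\tfrac{d}{dt}(V\circ\phi)(t)$ exists, $\dot\phi(t)$ exists, and $\dot\phi(t)\in F(\phi(t))$. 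Fixing such a $t$ and writing $x:=\phi(t)\in\cC$, $w:=\dot\phi(t)\in F(x)$, it remains to prove $\tfrac{d}{dt}(V\circ\phi)(t)\le-\rho(|x|_\cA)$.

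The core step is to replace $V$ near $x$ by a single $\cC^1$ piece. Take $h_k\downarrow0$; then $\phi(t+h_k)\in\cC\cap\cU(x)$ for large $k$ and $(\phi(t+h_k)-x)/h_k\to w$. Using density of $\cS(x)$ in $\cC\cap\cU(x)$, choose $y_k\in\cS(x)$ with $|y_k-\phi(t+h_k)|\le h_k^2$, so that $y_k\to^\cC x$, $(y_k-x)/h_k\to w$, and, by the Lipschitz bound, $\tfrac{d}{dt}(V\circ\phi)(t)=\lim_k (V(y_k)-V(x))/h_k$. Because $\cI(x)$ is finite, I would pass to a subsequence along which every $y_k$ is associated with one fixed index $i\in\cI(x)$, so that \eqref{eq:PiecewiseCondDef} and (L2) hold with this $i$ for all $y_k$: $y_k\in\cU_i$, $V(y_k)=V_i(y_k)$, $\nabla V(y_k)=\nabla V_i(y_k)$, and $\inp{\nabla V_i(y_k)}{f}\le-\rho(|y_k|_\cA)$ for all $f\in F(y_k)$. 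After identifying $V_i$ as a piece active at $x$ itself (i.e.\ $x\in\cU_i$ and $V(x)=V_i(x)$), a first-order Taylor expansion of the $\cC^1$ function $V_i$ at $x$ along the chords $y_k-x=h_k(w+o(1))$ gives $\tfrac{d}{dt}(V\circ\phi)(t)=\inp{\nabla V_i(x)}{w}$. Finally, since $F$ is inner semicontinuous and $y_k\to^\cC x$, there exist $f_k\in F(y_k)$ with $f_k\to w$; applying (L2) at $y_k$ with $f=f_k$ gives $\inp{\nabla V_i(y_k)}{f_k}\le-\rho(|y_k|_\cA)$, and letting $k\to\infty$ (using continuity of $\nabla V_i$ on $\cU_i$, of the inner product, and of $\rho$ and $|\cdot|_\cA$) yields $\inp{\nabla V_i(x)}{w}\le-\rho(|x|_\cA)$. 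Combining the last two facts proves \eqref{eq:flowingcond}.

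I expect the genuine obstacle to be exactly this identification: showing that $\tfrac{d}{dt}(V\circ\phi)(t)$ is computed by a \emph{single} $\cC^1$ piece $V_i$ that is defined and active at the point $x=\phi(t)$ itself, and not merely along the approximating sequence $y_k$, which a priori could approach $x$ from the boundary of the domains $\cU_i$. This is where the finiteness of $\cI(x)$, the density of $\cS(x)$, the local Lipschitz regularity of $V$ relative to $\cC$, and the constraint that solutions remain in $\cC$ (so that $w$ is a feasible velocity at $x$) must be combined; once this is in place, inner semicontinuity of $F$ serves only to transport the decrease condition (L2) from $\cS(x)$ to $x$ in the limit, and local boundedness of $F$ serves only to make $\phi$, hence $V\circ\phi$, Lipschitz.
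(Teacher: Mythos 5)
Your proposal is correct and follows essentially the same route as the paper: approximate the points $\phi(t+h_k)$ by a sequence in $\cS(x)$, use finiteness of $\cI(x)$ to extract a subsequence on which a single index $i$ is active, identify $\tfrac{d}{dt}(V\circ\phi)(t)$ with $\inp{\nabla V_i(x)}{\dot\phi(t)}$ via the $\cC^1$ structure of $V_i$, transport (L2) to $x$ by inner semicontinuity of $F$, and finish with the standard comparison argument. The ``identification'' step you flag as the genuine obstacle is resolved in the paper by exactly the continuity passage you sketch (from $V(y_k)=V_i(y_k)$ for all $k$ one gets $V(x)=V_i(x)$ in the limit), under the same implicit understanding that $x$ lies in $\cU_i$ so that $V_i$ and $\nabla V_i$ are defined and continuous there.
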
 

\begin{proof} 
Consider any $\phi\in\mathcal{S}_{F,\cC}$. Since $\phi$ is absolutely continuous by definition, so is $t\mapsto V(\phi(t))$.
Then, $\phi$ and $V\circ \phi$ are differentiable almost everywhere, 
i.e., for almost every $t\in\dom\phi=[0,T_\phi)$,
$\dot{\phi}(t)$ and $\frac{d}{dt}V(\phi(\cdot))(t)$ exist, and for almost every such $t$,
$\dot{\phi}(t)\in F(\phi(t))$. 
Thus consider any such time $t\in [0, T_\phi)$, and denote $x:=\phi(t)\in \cC$ and $f:=\dot \phi(t) \in F(x)$. Consider the neighborhood $\cU(x)$ of $x$, the associated index set $\cI(x)$, the sets $\cU_{i}$,the functions $\{V_i\}_{i\in \cI(x)}$ and the set $\cS(x)$ given by Definition~\ref{def:nicedefinition}.
Let us now consider a sequence of times $t_k \searrow t$, and denote by $x_k := \phi(t_k)\in \cC$; by continuity of $\phi$,  $x_k\to x$.  Without loss of generality we can suppose $x_k \in \cU(x)\cap \cC$, $\forall k \in \N$ (possibly disregarding some initial points and relabeling). Let us note that, in general, it is possible that $x_k=\phi(t_k)\notin \cS(x)$, for some $k\in \N$. 
 Since, by Definition~\ref{def:nicedefinition}, $\overline{\cS(x)}\supset \cC\cap \cU(x)$, for any $k\in \N$ there exists an auxiliary sequence $x_{k,l}\in \cS(x)$ converging to $x_k$. By property~(c) of Definition~\ref{def:nicedefinition}, for each $l\in \N$, there exists $i_{k,l}\in \cI(x)$ satisfying $V(x_{k,l})=V_{i_{k,l}}(x_{k,l})$ and $\nabla V(x_{k,l})=\nabla V_{i_{k,l}}(x_{k,l})$. Since $\cI(x)$ is finite,we may consider, without relabeling, a subsequence of $x_{k,l}$,  such that $i_{k,l}=i_k$ for some $i_k\in I_x$, for every $l\in \N$. Similarly we may consider, without relabeling, a subsequence of $x_k$ such that  $i_k=i$, for some $i\in \cI(x)$ and every $k\in \N$. Then $V(x_{k,l})=V_i(x_{k,l})$, $\forall\,l\in \N$, and, by continuity of $V$ and $V_i$, the following holds
\begin{equation}\label{eq:ProofLimits}
\begin{aligned}
V(x_k)&=\lim_{l\to\infty}V(x_{k,l})=\lim_{l\to\infty}V_i(x_{k,l})=V_i(x_k),\\
V(x)&=\lim_{k\to\infty}V(x_k)=\lim_{k\to\infty}V_i(x_k)=V_i(x).
\end{aligned}
\end{equation}
 Hence, recalling that $V_i\in \cC^1(U_i,\R)$ and using identities~\eqref{eq:ProofLimits}, we have 
 \begin{equation}\label{eq:longequalities}
\begin{aligned}
\frac{d}{dt}V(\phi(\cdot))(t)&=\lim_{\tau \searrow t}\frac{V(\phi(\tau))-V(\phi(t))}{\tau-t}=\lim_{k\to \infty}\frac{V(x_k)-V(x)}{t_k-t}\\
&=\lim_{k\to \infty}\frac{V_i(x_k)-V_i(x)}{t_k-t}=\lim_{\tau \searrow t}\frac{V_i(\phi(\tau))-V_i(\phi(t))}{\tau-t}\\
&=\frac{d}{dt}V_i(\phi(\cdot))(t)=\inp{\nabla V_i(x)}{\dot \phi(t)}=\inp{\nabla V_i(x)}{f}.
\end{aligned}
\end{equation}
Now, for each $k\in \N$, we can choose a large enough $l=l_k$ so that $x_{k,l_k}\to x$ as $k\to\infty$. By inner semicontinuity  of $F$  there exists a sequence $f_k \in F(x_{k,l_{k}})$ such that $f_k \to f$ as $k \to \infty$. Finally by continuity of $\nabla V_i$, $\rho$ and the scalar product we have, as $k \to \infty$
\[
\begin{aligned}
\eqref{eq:LyapunovDecrease}\;\;\Rightarrow\;\; &\inp{\nabla V_i(x_{k,l_{k}})}{f_k}\leq -\rho(|x_{k,l_{k}}|_\mathcal{A}),\\
&\hskip1cm\downarrow \hskip2cm \downarrow\\
&\;\;\inp{\nabla V_i(x)}{f}\;\;\;\;\;\leq-\rho(|x|_\cA),
\end{aligned}
\]
and by~\eqref{eq:longequalities}, we can conclude that~\eqref{eq:flowingcond} holds. By a standard comparison argument,~\eqref{eq:flowingcond} implies that $\cA$ is UGAS for~\eqref{eq:DiffIntroduction}. The same argument could be used to infer that $V\in \cL^{\circ}_F(\cA,\cC)$ implies that~\eqref{eq:flowingcond} holds with $\rho\equiv 0$ and thus $\cA$ is UGS for~\eqref{eq:DiffIntroduction}.\qed
\end{proof}

\section{Comparisons with other classes of functions}\label{sec:Comparison}

In this section, we relate the class $\cL(\cC)$ introduced in Definition \ref{def:nicedefinition} to the class of piecewise $\cC^1$ functions introduced in \cite{Sch12}, in the case where $\cC\subset\R^n$ is an \emph{open set}. 
This  also allows us to investigate the relations between the class $\cLL(\cA,\cC)$ and the concept of Clarke locally Lipschitz Lyapunov functions defined in~\eqref{eq:ClarkeLyapunov}.

\subsection{Piecewise $\cC^1$ functions vs $\cL(\cC)$ functions}

In~\cite[Chapter 4]{Sch12}, the following class of functions is introduced.

\begin{definition}[Piecewise $\cC^1$ function; \cite{Sch12}]\label{def:piecwise}
Given an \emph{open} set $\cO\subset \R^n$, a \emph{continuous} function $V:\cO\to \R$ is called \emph{piecewise $\cC^1$ function on $\cO$} if for each $x\in \cO$ there exist an open neighborhood $\cU(x)\subset \cO$ of $x$, an index set $\cI(x)=\{1,\dots K\}$, a family $\cF=\{V_1, \dots V_K\}\subset \cC^1(\cU(x), \R)$ such that 
\begin{equation}
V(z)\in \{V_i(z)\;\vert\;i\in I_x\}, \;\;\forall z\in \cU(x).\tag*{$\triangle$}
\end{equation}
\end{definition}
Roughly speaking, a piecewise $\cC^1$ function is, locally, a continuous selection from (or patching of) a finite number of ``pre-defined" continuously differentiable functions. 
Properties of such functions include:
\begin{itemize}[leftmargin=*]
\item[(a)] If $V$ is piecewise $\cC^1$ on $\cO$ then it is locally Lipschitz continuous, see \cite[Prop.~4.1.2]{Sch12},
\item[(b)] There exists an open and dense subset $\cS\subset \cO$ where $V$ is continuously differentiable. Moreover, given any $x\in \cO$, the neighborhood $\cU(x)$ and the associated index set $\cI(x)$ in Definition~\ref{def:piecwise},  it follows from~\cite[Prop.~4.1.5]{Sch12} that for every $z\in \cU(x)\cap \cS$ there exists an $i\in \cI(x)$ such that $\nabla V(z)=\nabla V_i(z)$.
\item[(c)] Given any $x\in\cO$, the related neighborhood $\cU(x)$, and the index set $\cI(x)$ in Definition~\ref{def:piecwise}, define the following set of \emph{essentially active indexes} at $x\in \cO$:
\begin{equation}\label{eq:activeindexset}
\cI^{e}(x)=\left \{i\in \cI(x)\;\vert\;x\in \overline{\inn(\{y\in \cU(x)\;\vert\;V(y)=V_i(y)\})}\right\}.
\end{equation}
It follows by continuity that $V(x)=V_i(x)$ for all $i\in \cI^e(x)$, and in particular there exists a neighborhood $\cU'(x)$ of $x$, $\cU'(x)\subset \cU(x)$, such that
\[
V(z)\in \{V_i(z)\;\vert\;i\in \cI^e(x)\},\;\;\;\text{for all }z\in \cU'(x).
\]
\item[(d)] By \cite[Proposition~4.3.1]{Sch12}, for any $x\in \cO$ 
\begin{equation}\label{eq:ClarkeGradPiece}
\partial V(x)=\co\{\nabla V_i(x)\;\vert\;i\in \cI^e(x)\}.
\end{equation}
\end{itemize}
Properties (a) and (b) above imply that a piecewise $\cC^1$ function satisfies properties (a), (b) and (c) of Definition~\ref{def:nicedefinition}, and thus a piecewise $\cC^1$ function is an $\cL(\cO)$ function for
open $\cO$. The converse is also true, as proven below, and thus we have the following result:

\begin{proposition}\label{lemma:equivalence}
Consider an \emph{open} set $\cO\subset \R^n$. A function $V:\dom(V)\to\R$ is in $\cL(\cO)$ if and only if it is piecewise $\cC^1$ on $\cO$.
\end{proposition}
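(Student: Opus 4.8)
The \emph{if} implication---that every piecewise $\cC^1$ function on the open set $\cO$ belongs to $\cL(\cO)$---has already been obtained above by checking (a), (b), (c) of Definition~\ref{def:nicedefinition} against properties (a), (b) of piecewise $\cC^1$ functions, so my plan is to prove only the converse: given $V\in\cL(\cO)$ with $\cO$ open, show that $V$ is piecewise $\cC^1$ on $\cO$ in the sense of Definition~\ref{def:piecwise}. I would work locally: fix an arbitrary $x\in\cO$, unpack Definition~\ref{def:nicedefinition} at $x$ to get a neighborhood $\cU(x)$, a dense set $\cS(x)\subset\cO\cap\cU(x)$ of differentiability points, a finite index set $\cI(x)$, and open sets $\cU_i$ carrying $\cC^1$ functions $V_i\colon\cU_i\to\R$ ($i\in\cI(x)$) such that every $y\in\cS(x)$ has $V(y)=V_i(y)$ for some $i$ with $y\in\cU_i$. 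The goal is then to produce a single neighborhood $\cU'(x)$ of $x$ and a finite family of functions, all $\cC^1$ on $\cU'(x)$, from which $V$ is a pointwise selection on all of $\cU'(x)$ rather than merely on $\cS(x)$; running this at every $x$ gives Definition~\ref{def:piecwise}.

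The core of the argument is then short. Once it is arranged that the pieces $V_i$ are simultaneously defined and $\cC^1$ on a common neighborhood---take $\cU'(x)$ to be an open ball about $x$ inside $\cO\cap\cU(x)\cap\bigcap_{i\in\cI(x)}\cU_i$---one observes that $\cS(x)\cap\cU'(x)$ is still dense in $\cU'(x)$, since density in an open set restricts to open subsets. I would then fix $z\in\cU'(x)$, take $z_k\to z$ with $z_k\in\cS(x)\cap\cU'(x)$, and use that each $z_k$ carries an index $i_k\in\cI(x)$ with $V(z_k)=V_{i_k}(z_k)$: finiteness of $\cI(x)$ lets me pass to a subsequence on which $i_k\equiv i$, so $V(z_k)=V_i(z_k)$ for all $k$, and letting $k\to\infty$ with $V$ continuous (local Lipschitzness, property~(a)) and $V_i$ continuous ($\cC^1$) yields $V(z)=V_i(z)$. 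Hence $V(z)\in\{V_i(z)\mid i\in\cI(x)\}$ on all of $\cU'(x)$, which is precisely the piecewise $\cC^1$ property at $x$. I note that only the value-agreement half of Definition~\ref{def:nicedefinition}(c) enters here; its gradient-agreement half is what powered the \emph{if} direction, and, via \eqref{eq:ClarkeGradPiece}, the resulting equivalence is exactly what will let the next subsection compare $\cLL(\cA,\cC)$ with the Clarke condition \eqref{eq:ClarkeLyapunov}.

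The step I expect to be the main obstacle is the reduction deferred into the opening of the second paragraph: making sure the $\cC^1$ pieces can be chosen on a common neighborhood of $x$, i.e. that $\cO\cap\cU(x)\cap\bigcap_i\cU_i$ is actually a neighborhood of $x$. Definition~\ref{def:nicedefinition}(c) only guarantees each $\cU_i$ around the points $y\in\cS(x)$ it handles, so some $\cU_i$ could fail to contain $x$, and then $V_i$ need not extend across $x$ as a $\cC^1$ function. The way I would dispose of this is to show that indices with $x\notin\cU_i$ are immaterial near $x$: the local Lipschitz bound on $V$ from property~(a) forces $|\nabla V_i|$ to stay bounded along any $\cS(x)$-sequence tending to $x$, which rules out a piece that agrees with $V$ on a set accumulating at $x$ while having gradient that blows up or fails to converge there; combined with the density of $\cS(x)$, this lets me shrink $\cU(x)$ and drop every index $i$ with $x\notin\cU_i$, retaining only pieces that do extend $\cC^1$ to a neighborhood of $x$. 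After that reduction the density--finiteness--continuity argument above finishes the proof.
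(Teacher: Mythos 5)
Your overall strategy coincides with the paper's: use density of $\cS(x)$, finiteness of $\cI(x)$, and continuity of $V$ and of the $V_i$ to upgrade the selection property $V(y)\in\{V_i(y)\,\vert\,i\in\cI(x)\}$ from the dense set $\cS(x)$ to a full neighborhood of $x$. Your second paragraph is essentially the paper's proof of the covering $\cU(x)\subset\bigcup_{i\in\cI(x)}\cV_i$ with $\cV_i:=\{z\in \cU_i\;\vert\;V(z)=V_i(z)\}$, and that part is fine. You are also right that the delicate point is whether the pieces can be arranged to be $\cC^1$ on a \emph{common} neighborhood of $x$ -- this is genuinely the crux.

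The gap is in your proposed resolution of that point. The claim that every index $i$ with $x\notin\cU_i$ is ``immaterial near $x$'' and can be dropped after shrinking, justified by boundedness of $|\nabla V_i|$ along $\cS(x)$-sequences tending to $x$, is false; a gradient bound is not the relevant obstruction. Take $n=1$, $\cO=(-1,1)$, $x=0$, $V(y)=|y|$, certified as an $\cL(\cO)$ function by $\cS(0)=\cU(0)\setminus\{0\}$, $\cI(0)=\{1,2\}$, $\cU_1=(-1,0)$, $V_1(y)=-y$, $\cU_2=(0,1)$, $V_2(y)=y$. Both gradients are constant, hence bounded; neither $\cU_i$ contains $0$; and each index is indispensable on its side of the origin, so dropping either leaves half of $\cS(0)$ uncovered no matter how far you shrink the neighborhood. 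Your reduction would delete both pieces and the argument collapses, even though the conclusion is of course true for $|y|$: the correct move there is to \emph{extend} $V_1,V_2$ to $\cC^1$ functions on a neighborhood of $0$, not to discard them. The paper proceeds differently at this juncture: it first establishes the covering $\cU(x)\subset\bigcup_{i\in\cI(x)}\cV_i$ and then restricts attention to $\cI'(x):=\{i\in\cI(x)\;\vert\;x\in\cU_i\}$ and to the neighborhood $\cU'(x):=\bigcap_{i\in\cI'(x)}\cU_i\cap\cU(x)$, on which all retained pieces are $\cC^1$. If you keep your structure, what you must actually prove is that the indices in $\cI'(x)$ already realize the selection on some neighborhood of $x$; a Lipschitz bound on the gradients does not deliver that, so this step needs to be redone.
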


\begin{proof}
Let $V\in \cL(\cO)$. We will show that it is piecewise $\cC^1$ on $\cO$. The continuity of $V$ on $\cO$ is trivial, since  $V$ is locally Lipschitz relative to $\cO$, by item (a) of Definition~\ref{def:nicedefinition}. Take any $x\in \cO$ and consider the neighborhood $\cU(x)$, the set $\cS(x)$, the index set $\cI(x)$, the sets $\cU_i$, and the functions $V_i:\cU_i\to \R$ given in Definition~\ref{def:nicedefinition} of $\cL(\cO)$. Since $\cO$ is open, without loss of generality, we suppose that $\cU(x)\subset \cO$. We firstly prove that 
\begin{equation}\label{eq:ViDef}
\cU(x)\subset \bigcup_{i\in \cI(x)}\cV_i:=\bigcup_{i\in \cI(x)}\{z\in \cU_i\;\vert\;V(z)=V_i(z)\}.
\end{equation}
To prove~\eqref{eq:ViDef}, take any $z\in \cU(x)$. If $z\in \cS(x)$, then $V(z)=V_i(z)$ for some $i\in \cI(x)$ and thus $z\in \bigcup_{i\in I_x} \cV_i$. 
If $z\not\in \cS(x)$, then from conditions~(b) and (c) in Definition~\ref{def:nicedefinition} there exist $z_k\in \cS(x)$, $z_k\to z$ such that $V(z_k)=V_{i_k}(z_k)$ for some $i_k\in \cI(x)$. 
By finiteness of $\cI(x)$, without loss of generality we can assume $i_k=i$ for some $i\in \cI(x)$ and for all $k\in \N$. Then by continuity of $V$ and $V_i$, we have
\[
V(z)=\lim_{k\to \infty}V(z_k)=\lim_{k\to \infty}V_i(z_k)=V_i(z),
\]
 which shows that $z\in \cV_i\subset \bigcup_{i\in \cI(x)} \cV_i$.  
Concluding, define $\cI'(x):=\{i\in \cI(x)\;\vert\;x\in \cU_i\}$ and $\cU'(x)\subset \cU(x)$ as $\cU'(x):=\bigcap_{i\in \cI'(x)} \cU_i\cap \cU(x)$. By~\eqref{eq:ViDef}, $\cI'(x)$ is not empty and thus $\cU'(x)$ is an (open) neighborhood of $x$. Again by~\eqref{eq:ViDef} we have
 \[
V(z)\in \{V_i(z)\;\vert\;i\in \cI'(x)\},\;\;\forall\,z\in \cU'(x),
 \]
 concluding the proof.
\qed
\end{proof}
Definition~\ref{def:nicedefinition} and the class $\cL(\cC)$ are intended to be used on sets $\cC$ that are not open. In particular, in later sections they are used on the flow set of a hybrid system. 
It is less clear, and outside the scope of this paper, if and how can Definition~\ref{def:piecwise} be generalized to describe piecewise $\cC^1$ structure on a set that is not open, and then if and how such a  generalization relates to the class from Definition~\ref{def:nicedefinition}.

As an illustration of Definition~\ref{def:piecwise}, we will show in Section~\ref{sec:discFurth} how certain classes of Lyapunov functions (such as max-of-quadratics introduced in~\cite{goebel2}) relate to piecewise $\cC^1$ functions. Moreover, when the set $\cC$ is not necessarily open, we underline differences between the function class $\cL(\cC)$ and the class of piecewise $\cC^1$ functions on a neighborhood of $\cC$ with the help of Example~\ref{ex:CircleExample} (which appears in Section~\ref{sec:SubsecClarkevsBlabla}).

\subsection{Clarke Lyapunov Functions vs $ \cLL(\cA,\cC)$ functions}
\label{sec:SubsecClarkevsBlabla}
We study here the relation between \emph{locally Lipschitz Clarke Lyapunov functions} (in the sense of~\eqref{eq:ClarkeLyapunov}) and locally finitely generated Lyapunov functions $\cLL(\cA,\cC)$ (in the sense of Definition~\ref{def:nicedefinition}). The next lemma shows that a function $V\in \cLL(\cA,\cC)$ also satisfies the condition~\eqref{eq:ClarkeLyapunov} in the interior of $\cC$, and thus is a Clarke locally Lipschitz Lyapunov function  if $\cC$ is open. 

\begin{lemma}
Given a closed set $\cA\subset \R^n$, a set $\cC\subset \R^n$ and  $F:\cC\rightrightarrows \R^n$ a locally bounded and inner semicontinuous set-valued map. Consider a function $V\in \cLL(\cA,\cC)$, then for every $x\in \inn(\cC)$, we have 
\begin{equation}\label{eq:ClarkeCondition2}
\inp{v}{f}\leq -\rho(|x|_{\cA}),\;\;\forall v\in \partial V(x),\;\;\forall f\in F(x).
\end{equation}
If $V\in \cLL^\circ(\cA,\cC)$ then, for every $x\in \inn(\cC)$, inequality~\eqref{eq:ClarkeCondition2} holds with $\rho\equiv0$. 
\end{lemma}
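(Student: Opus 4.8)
The plan is to leverage the already-established relationship between $\cL(\cC)$ functions and piecewise $\cC^1$ functions on open sets, together with the pointwise decrease inequality~\eqref{eq:LyapunovDecrease} and the characterization~\eqref{eq:ClarkeGradPiece} of the Clarke gradient for piecewise $\cC^1$ functions. Fix $x\in\inn(\cC)$. Since $x$ has an open neighborhood contained in $\cC$, Proposition~\ref{lemma:equivalence} tells us that $V$ is piecewise $\cC^1$ on a neighborhood of $x$, so all the structure in properties (a)--(d) following Definition~\ref{def:piecwise} is available. In particular, by~\eqref{eq:ClarkeGradPiece}, $\partial V(x)=\co\{\nabla V_i(x)\;\vert\;i\in\cI^e(x)\}$ where $\cI^e(x)$ is the set of essentially active indices. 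So it suffices to show that $\inp{\nabla V_i(x)}{f}\leq-\rho(|x|_\cA)$ for every essentially active index $i\in\cI^e(x)$ and every $f\in F(x)$: the inequality for a general $v\in\partial V(x)$ then follows by taking convex combinations (the right-hand side $-\rho(|x|_\cA)$ is constant in $v$, hence preserved by convex combinations).

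The key step is therefore to prove, for a fixed essentially active index $i$, that $\inp{\nabla V_i(x)}{f}\leq -\rho(|x|_\cA)$ for all $f\in F(x)$. By definition of $\cI^e(x)$ in~\eqref{eq:activeindexset}, $x$ lies in the closure of $\inn(\{y\in\cU(x)\;\vert\;V(y)=V_i(y)\})$; call this open set $\Omega_i$. So there is a sequence $y_k\to x$ with $y_k\in\Omega_i$. On the open set $\Omega_i$ we have $V\equiv V_i$, hence $\nabla V(y)=\nabla V_i(y)$ at every point of $\Omega_i$ where $V$ is differentiable — and since $V$ is locally Lipschitz, such points are dense (indeed full measure) in $\Omega_i$ by Rademacher. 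Moreover, since the sets $\cS(x)$ from Definition~\ref{def:nicedefinition}(b) are dense in $\cC\cap\cU(x)$, and the points of $\Omega_i$ where $\nabla V$ exists form a dense subset of $\Omega_i$, we can perturb each $y_k$ slightly to land in $\cS(x)\cap\Omega_i$ while still converging to $x$; at such a point $y_k$, property (c) gives some index $j_k\in\cI(x)$ with $\nabla V(y_k)=\nabla V_{j_k}(y_k)$, and the decrease condition~(L2) gives $\inp{\nabla V_{j_k}(y_k)}{g}\leq-\rho(|y_k|_\cA)$ for all $g\in F(y_k)$. But on $\Omega_i$ we also have $\nabla V(y_k)=\nabla V_i(y_k)$, so $\inp{\nabla V_i(y_k)}{g}\leq-\rho(|y_k|_\cA)$ for all $g\in F(y_k)$.

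Now fix $f\in F(x)$. By inner semicontinuity of $F$ (and local boundedness, to keep everything in a compact set), there is a sequence $f_k\in F(y_k)$ with $f_k\to f$. Applying the inequality just obtained at each $y_k$ with $g=f_k$ gives $\inp{\nabla V_i(y_k)}{f_k}\leq-\rho(|y_k|_\cA)$; passing to the limit using continuity of $\nabla V_i$ (it is $\cC^1$ on $\cU_i\ni x$), continuity of the inner product, and continuity of $\rho$, we obtain $\inp{\nabla V_i(x)}{f}\leq-\rho(|x|_\cA)$. Since $f\in F(x)$ was arbitrary and $i\in\cI^e(x)$ was arbitrary, the claim for all $v\in\partial V(x)$ follows by convexity, proving~\eqref{eq:ClarkeCondition2}. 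The weak case is identical with $\rho\equiv0$.

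I expect the main obstacle to be the bookkeeping in the second step: one must be careful that the approximating points can be chosen simultaneously inside $\cS(x)$ (so that property (c) and (L2) apply) and inside the open region $\Omega_i$ where $V=V_i$ (so that $\nabla V=\nabla V_i$ there), while still converging to $x$. This uses that $\cS(x)$ is dense in $\cC\cap\cU(x)$, that $\Omega_i$ is open and nonempty with $x\in\overline{\Omega_i}$, and that a dense set intersected with a nonempty open set is dense in that open set — a routine but slightly delicate point-set argument. Everything after that is a standard limiting argument of exactly the type used in the proof of Theorem~\ref{theorem:MainTheo}.
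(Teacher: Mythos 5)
Your proposal is correct and follows essentially the same route as the paper's proof: reduce via Proposition~\ref{lemma:equivalence} and the characterization $\partial V(x)=\co\{\nabla V_i(x)\,\vert\,i\in\cI^e(x)\}$ to the essentially active gradients, then for each such $i$ run a diagonal approximation through $\cS(x)\cap\inn(\cV_i)$ and pass to the limit using inner semicontinuity of $F$ and continuity of $\nabla V_i$ and $\rho$. If anything, you are slightly more explicit than the paper about why the decrease inequality for the index $j_k$ furnished by (L2) transfers to the fixed index $i$ (namely, $\nabla V(y_k)=\nabla V_{j_k}(y_k)=\nabla V_i(y_k)$ on the open region where $V\equiv V_i$), a point the paper treats implicitly.
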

\begin{proof}
Consider any $x\in \inn(\cC)$, and the open neighborhood $\cU(x)$ given by Definition~\ref{def:nicedefinition}; we can suppose, without loss of generality, that $\cU(x)\subset \inn(\cC)$. Thanks to Proposition~\ref{lemma:equivalence}, $V$ is piecewise $\cC^1$ on $\cU(x)$, and thus, recalling~\eqref{eq:ClarkeGradPiece}, we have
\[
\partial V(x)=\co\{\nabla V_i(x)\;\vert\;i\in \cI^e(x)\}.
\]
The proof is carried out by showing
\begin{equation}\label{eq:inLemmmaClarke}
\inp{\nabla V_i(x)}{f}\leq -\rho(|x|_\cA),\;\;\forall\,i\in \cI^e(x),\;\forall\,f\in F(x).
\end{equation}
For proving~\eqref{eq:inLemmmaClarke}, consider any $i\in \cI^e(x)$. By definition~\eqref{eq:activeindexset}, there exists a sequence $x_k\to x$ such that $x_k\in \inn(\cV_i)$ for all $k\in \N$, where $\cV_i:=\{z\in \cU(x)\;\vert\;V(z)=V_i(z)\}$. By density of $\cS(x)$,
given in~\eqref{eq:densityOfS} in Definition~\ref{def:nicedefinition}, for each $k\in \N$ there exists an auxiliary sequence $x_{k,l}\to x_k$ as $l\to \infty$, such that $x_{k,l}\in \cS(x)\cap \inn(\cV_i)$, for all $l\in \N$. For each $k\in \N$, we choose a large enough $l=l_k$ so that $x_{k,l_k}\to x$ as $k\to\infty$. By construction $x_{k,l_k}\in \cS(x)\cap \inn(\cV_i)$, for all $k\in \N$. Consider any $f\in F(x)$, by inner semicontinuity of $F$ we can find a sequence $f_k\in F(x_{k,l_k})$ such that $f_k\to f$ as $k\to \infty$. By definition of $\cV_i$ and equation~\eqref{eq:LyapunovDecrease}, we have
\[
\inp{\nabla V_i(x_{k,l_k})}{f_k}\leq -\rho(|x_{k,l_k}|_\cA),\;\;\;\forall k\in\N,
\]
and by continuity of $\nabla V_i$ and $\rho$ this implies
\[
\inp{\nabla V_i(x)}{f}\leq-\rho(|x|_\cA).
\]
Since $x\in \inn(\cC)$, $i\in \cI^e(x)$ and $f\in F(x)$ are arbitrary, we can conclude the proof. The same argument can be used in the case $V\in \cLL^\circ(\cA,\cC)$ with $\rho\equiv 0$. \qed
\end{proof}

The lemma showed, roughly, that a function $V\in \cLL(\cA,\cC)$ is a Clarke locally Lipschitz Lyapunov function if $\cC$ is open. It is not clear how, under the conditions of Definition~\ref{def:nicedefinition}, to consider the condition 
\eqref{eq:ClarkeCondition2} at points on the boundary $\bd(\cC)$. The next example shows how a function in $\cLL(\cA,\cC)$ 
can work when $\cC$ has no interior. 

\setcounter{example}{2}
\begin{example}\label{ex:CircleExample}
Consider the closed set 
\[
\cC:=\{x:=(x_1,x_2)^\top\in \R^2 \;\vert\; (x_1-1)^2+(x_2-1)^2=2\;\wedge x_2\geq 0\},
\]
 represented by the red line in Figure~\ref{fig:ExampleCircle}, and the vector field $f:\cC\to \R^2$, defining system
 \begin{equation}\label{eq:Examplesystem}
\dot x=f(x)=|x|(Ax+b),\;\;\;\;\;x\in\cC,
\end{equation}
where $A=\begin{pmatrix}
0&-1\\1&0
\end{pmatrix}$ and $b=(1,-1)^\top$.\\
Note that $f(0)=0$ and $f(x)\neq 0$ for all $x\in \cC\setminus \{0\}$ (since $A^{-1}b=(1,1)^\top\notin \cC$). Denoting by $T_\cC(x)$ the tangent cone of $\cC$ at $x$ (see e.g. \cite[Definition 6.1]{rockafellar}), we have $f(x)\in T_\cC(x)$, for every $x\in \cC$, see Figure~\ref{fig:ExampleCircle} for a graphical representation.\\
We want to prove that $\cA=\{0\}$ is UGAS for system~\eqref{eq:Examplesystem}, constructing a function $V\in \cLL(\{0\},\cC)$.
To this end, consider three functions $V_i\in \cC^1(\R^2,\R)$, $i \in \{1,2,3\}$ given by
\begin{equation}\label{eq:ExampleFunction}
V_1(x):=x_2,\;\;V_2(x):=x_1+2,\;\;V_3(x):=-x_2+6.
\end{equation}
We show below that the function $V:\R^2\to \R$, given below, satisfies $V\in\cLL(\{0\},\cC)$. 
\[
V(x):=\begin{cases}
V_1(x)\;\;\text{if } x_1\leq 0,\\
V_2(x)\;\;\text{if } 0<x_1<2,\\
V_3(x)\;\;\text{if } x_1\geq 2.
\end{cases}
\]

\begin{figure}
  \centering
  \begin{tabular}{lcc}
    \includegraphics[width=.45\linewidth,height=170pt]{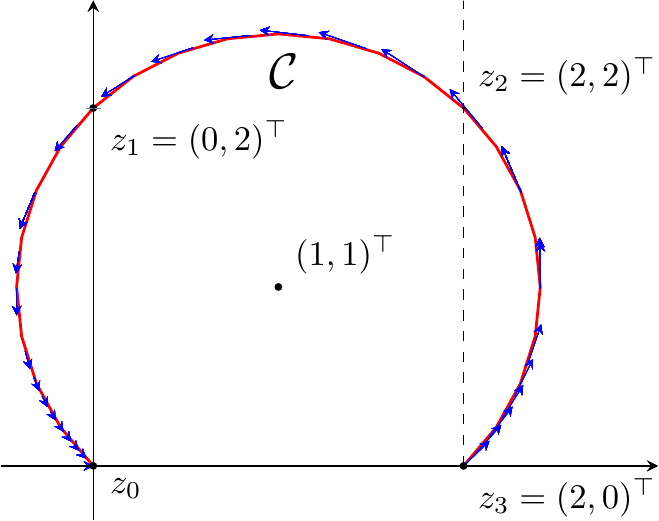} &
    \includegraphics[width=.45\linewidth,height=170pt]{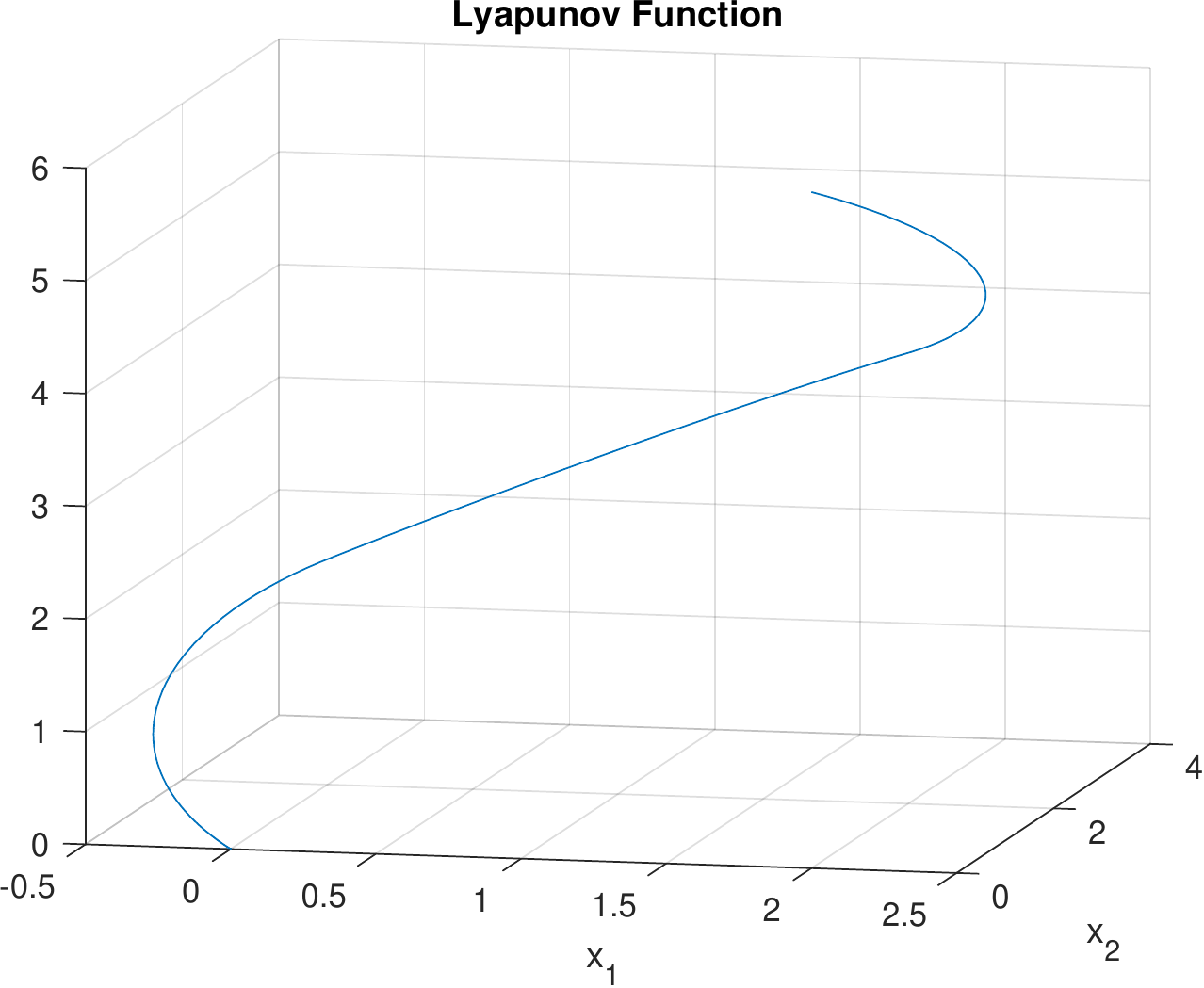}
  \end{tabular}
  \caption{Example~\ref{ex:CircleExample}, the system~\eqref{eq:Examplesystem} and the behavior of the function $V$.}\label{fig:ExampleCircle}
\end{figure}

\emph{Step 1: Local Lipschitz continuity and positive definiteness in $\cC$:}
By definition, $V$  is continuously differentiable in $\cC\setminus \{z_0,z_1,z_2,z_3\}$, where $z_0=(0,0)^\top$, $z_1:=(0,2)^\top$, $z_2:=(2,2)^\top$ and $z_3:=(2,0)^\top$, as shown in Figure~\ref{fig:ExampleCircle}. At these points $V$ is continuous because 
\[
\begin{aligned}
V(z_1)=V_1(z_1)&=V_2(z_1)=2,\\
V(z_2)=V_2(z_2)&=V_3(z_2)=4,\\
V(z)&=V_1(z),\;\;\;\;\;\;\;\;\;\;\;\forall z\in \ball(z_0,1)\cap\cC.\\
V(z)&=V_3(z),\;\;\;\;\;\;\;\;\;\;\;\forall z\in \ball(z_3,1)\cap\cC.
\end{aligned}
\]
It is also easily checked that $V$ is locally Lipschitz relative to $\cC$ with constant $L(x)\equiv 1$.
Finally, from a case-by-case analysis, it is easily checked that
$
\frac{|x|}{2}\leq V(x)\leq 3|x|$, $\forall x\in \cC$.

\emph{Step 2: Lyapunov conditions:} By definition of $V$ and $f$ we have
\[
\nabla V(x)=\begin{cases}

(0,1)^\top\;\;&\text{if }x_1<0\\\
(1,0)^\top\;\;&\text{if }0<x_1<2\\
(0,-1)^\top\;\;&\text{if }x_1>2\\
\end{cases}
\]
and computing we obtain
\[
\inp{\nabla V(x)}{f(x)}=\begin{cases}
|x|(x_1-1)\;\;&\text{if }x_1<0,\\
|x|(-x_2-1)\;\;&\text{if }0<x_1<2,\\
|x|(1-x_1)\;\;&\text{if }x_1>2.
\end{cases}
\]
Concluding, we may prove the almost everywhere condition 
\begin{equation}\label{eq:LyapunovExample}
\inp{\nabla V(x)}{f(x)}\leq- |x|, \;\;\forall x\in \cC\setminus \{z_1,z_2,z_3\}.
\end{equation}
It is thus clear that conditions (b), (c) and (L2) of Definition~\ref{def:nicedefinition} hold, by choosing $\cU(z_i)=\ball(z_i,1)$, $\cI(z_i)=\{i,i+1\}$, $\cS(z_i)=\cU(z_i)\setminus\{z_i\}$, for $i\in \{1,2\}$. For $z_3$ it suffices to choose  $\cU(z_3)=\ball(z_3,1)$, $\cI(z_3)=\{3\}$, $\cS(z_3)=\cU(z_3)\setminus\{z_3\}$. For the other points in $\cC$ the claim is trivial from~\eqref{eq:LyapunovExample}. We have thus proved that $V\in \cLL(\{0\},\cC)$, and Theorem~\ref{theorem:MainTheo} implies UGAS of the origin for system~\eqref{eq:Examplesystem}. See Figure~\ref{fig:ExampleCircle} on the right for a graphical representation of the function $V$.

\emph{Comparison with Clarke Lyapunov functions: }
It is easily seen that the function $V$ fails to be continuous outside $\cC$; more precisely, for every open set $\cO$ such that $\cC\subset \cO$, $V$ is not continuous on $\cO$, because discontinuity is inevitable in any neighborhood of $z_1$ and $z_2$. In particular, for any open set $\cO\supset \cC$, the function $V$ is not piecewise $\cC^1$ on $\cO$, recall Definition~\ref{def:piecwise}. Moreover, it is clear that the Clarke gradient cannot be defined at $z_1$ and $z_2$, and thus $V$ is not a Clarke Lyapunov function, as defined in~\eqref{eq:ClarkeLyapunov}. We emphasize that alternative constructions are possible but what is appealing about our design is its intuitive nature.\hfill$\triangle$
\end{example}

\section{Global piecewise structure}\label{sec:discFurth}

The properties in Definitions~\ref{def:nicedefinition} and~\ref{def:piecwise} are, in a sense, local, because they require that each 
$x\in \dom V$ has neighborhood on which $V$ has piecewise structure and is built from finitely many smooth functions, but this structure and the smooth functions may be different for different $x$. Below, we introduce a family of locally Lipschitz functions for which the  piecewise structure is global, in the sense that $V$ is globally obtained by ``gluing together'' a finite number of smooth functions.

\begin{definition}[Proper Piecewise $\mathcal{\cC}^1$ functions]\label{def:patchy}
Let $\cO\subset\R^n$ be an open set. A continuous function $V:\cO \to \R$ is called a \emph{proper piecewise $\cC^1$ function} on $\cO$ if there exist $\cI=\{1, \dots, K\}$, closed sets  $\{\cX_i\}_{i\in \cI}$, open sets $\{\cO_i\}_{i\in \cI}$  and continuously differentiable functions $V_i:\cO_i \to \R$, $i \in \cI$, such that:
\begin{enumerate}[leftmargin=*, label=(\Alph*)]
\item $\cX_i\cap \cO\subset \cO_i$, for all $i \in \cI$,\label{enum:a}
\item $\overline{\inn(\cX_i)}=\cX_i$, (namely $\cX_i$ is \emph{regular-closed}), for all $i \in \cI$,\label{enum:b}
\item $\cO \subset\bigcup_{i\in \cI}\cX_i$,\label{enum:c}
\item $V(x)=V_i(x), \;\;\;\text{if} \;x \in \cX_i$.\label{enum:d} \hfill $\triangle$
\end{enumerate}
\end{definition}

\begin{remark}
If $V:\cO\to \R$ is a proper piecewise $\cC^1$ function, then it is  piecewise $\cC^1$, as in Definition~\ref{def:piecwise}. Indeed, it suffices to select $\cU(x)\equiv \cO$, $\cI(x)\equiv \cI$ and $\cF\equiv\{V_1,\dots, V_K\}$.\hfill$\triangle$
\end{remark}

For this class of proper piecewise $\cC^1$ functions, given a set valued map $F:\cC\rightrightarrows \R^n$, in the case where $\cC$ is regular-closed (that is $\cC=\overline{\inn(\cC)}$), it may be computationally easier to check the conditions of Definition~\ref{def:nicedefinition}, as the next result suggests. The sufficient conditions below may hold when conditions based on the Clarke subgradient don't; this is illustrated in Example \ref{example:clegg}.

 \begin{theorem}\label{proposition:piecwiseprop}
 Consider a set $\cC\subset \R^n$ such that $\overline{\inn(\cC)}=\cC$ and let  $V :\cO\to \R$ be a proper piecewise $\cC^1$ function with $\cC\subset \cO$. Consider a closed set $\cA\subset \R^n$ and a locally bounded and inner semicontinuous set-valued map $F:\cC \rightrightarrows \R^n$. 
If there exist $\alpha_1,\alpha_2\in \cK_\infty$, and $\rho \in \cPd$ such that
\begin{equation}\label{eq:piecwisepositive}
\alpha_1(|x|_\cA)\leq V(x)\leq\alpha_2(|x|_\cA),\;\;\forall x\in \cC,
\end{equation}
\begin{equation}\label{eq:piecewiseFlowcond}
\inp{\nabla V_i(x)}{f}\leq -\rho(|x|_\cA),\;\;\;\;\begin{cases} \forall x\in \inn(\cX_i)\cap \inn(\cC),\\ 
\forall f\in F(x),\,\forall i\in \cI,\end{cases}
\end{equation}
then $V\in \cLL(\cA,\cC)$. Thus $\cA$ is UGAS
 for system~\eqref{eq:DiffIntroduction}. Considering $\rho\equiv 0$ in~\eqref{eq:piecewiseFlowcond}, then $V\in \cLL^\circ(\cA,\cC)$ and $\cA$ is UGS
 for system~\eqref{eq:DiffIntroduction}.
  \end{theorem}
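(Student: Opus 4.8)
The plan is to verify that, under the stated hypotheses, $V$ satisfies every requirement of Definition~\ref{def:nicedefinition} for membership in $\cLL(\cA,\cC)$, and then to invoke Theorem~\ref{theorem:MainTheo} (for which local boundedness and inner semicontinuity of $F$ are exactly what is needed). Since $V$ is a proper piecewise $\cC^1$ function on $\cO$, the remark following Definition~\ref{def:patchy} shows it is piecewise $\cC^1$ on $\cO$, hence locally Lipschitz on $\cO$ by the properties listed after Definition~\ref{def:piecwise}; in particular $\overline{\cC}=\cC\subset\cO=\dom V$, and for each $x\in\cC$ one may choose an open neighbourhood $\cU(x)\subset\cO$ of $x$ on which $V$ is Lipschitz. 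This gives item~(a) of Definition~\ref{def:nicedefinition}, and item~(L1) is just hypothesis~\eqref{eq:piecwisepositive}.

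For items~(b), (c) and~(L2), fix $x\in\cC$, set $\cI(x):=\cI$ and $\cU_i:=\cO_i$ (with the given $V_i$), and take as candidate dense set
\[
\cS(x):=\cU(x)\cap\inn(\cC)\cap\bigcup_{i\in\cI}\inn(\cX_i)\;\subset\;\cC\cap\cU(x).
\]
If $y\in\cS(x)$, pick $i\in\cI$ with $y\in\inn(\cX_i)$. Since $y\in\cC\subset\cO$, the set $\inn(\cX_i)\cap\cO$ is an open neighbourhood of $y$, contained in $\cO_i$ by property~(A) of Definition~\ref{def:patchy}, and on it $V=V_i$ by property~(D); as $V_i\in\cC^1(\cO_i)$ this shows that $\nabla V(y)$ exists with $V(y)=V_i(y)$ and $\nabla V(y)=\nabla V_i(y)$, i.e.\ \eqref{eq:PiecewiseCondDef} holds for this $i$. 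Moreover $y\in\inn(\cX_i)\cap\inn(\cC)$, so hypothesis~\eqref{eq:piecewiseFlowcond} is precisely the decrease condition~\eqref{eq:LyapunovDecrease}. Thus, once density of $\cS(x)$ is established, items~(b), (c) and~(L2) all follow, so that $V\in\cLL(\cA,\cC)$.

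The density of $\cS(x)$ in $\cC\cap\cU(x)$ is the only genuinely non-bookkeeping point. Let $z\in\cC\cap\cU(x)$ and let $W\subset\cU(x)$ be any neighbourhood of $z$. Because $\cC=\overline{\inn(\cC)}$, the set $W':=W\cap\inn(\cC)$ is a nonempty open subset of $\inn(\cC)$, and $W'\subset\cC\subset\bigcup_{i\in\cI}\cX_i$ by property~(C). By property~(B), $\cX_i=\overline{\inn(\cX_i)}$ for every $i$, whence $\bigcup_{i\in\cI}\cX_i\subset\overline{\bigcup_{i\in\cI}\inn(\cX_i)}$; therefore the nonempty open set $W'$ must meet $\bigcup_{i\in\cI}\inn(\cX_i)$. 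Any point $w\in W'\cap\bigcup_{i\in\cI}\inn(\cX_i)$ lies in $\cU(x)\cap\inn(\cC)\cap\bigcup_{i\in\cI}\inn(\cX_i)=\cS(x)$ and in $W$, so $z\in\overline{\cS(x)}$.

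Collecting everything gives $V\in\cLL(\cA,\cC)$, and Theorem~\ref{theorem:MainTheo} then yields UGAS of $\cA$ for~\eqref{eq:DiffIntroduction}; the weak statement follows verbatim, with $\rho\equiv0$ throughout, giving $V\in\cLL^\circ(\cA,\cC)$ and UGS. I expect the only delicate part to be the careful bookkeeping between the closed patches $\cX_i\subset\R^n$ and the open domains $\cO$ and $\cO_i$ when arguing differentiability of $V$ near points of $\cS(x)$: this is why the intersections with $\cO$ and with $\inn(\cC)$ in the definition of $\cS(x)$ cannot be dropped, and why property~(A) of Definition~\ref{def:patchy} is invoked exactly there.
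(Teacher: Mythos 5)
Your proposal is correct and follows essentially the same route as the paper: verify items (a)--(c), (L1), (L2) of Definition~\ref{def:nicedefinition} with $\cS(x)$ taken to be (the trace on $\cU(x)$ of) $\inn(\cC)\cap\bigcup_{i\in\cI}\inn(\cX_i)$, prove its density using $\cC=\overline{\inn(\cC)}$ together with properties \ref{enum:b} and \ref{enum:c} of Definition~\ref{def:patchy}, and then invoke Theorem~\ref{theorem:MainTheo}. The only cosmetic differences are that the paper takes $\cU(x)\equiv\cO$ and establishes density via a diagonal double-sequence extraction, whereas you argue directly with neighbourhoods; both arguments are sound.
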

 \begin{proof}
 We prove the theorem by showing that all the hypotheses of Definition~\ref{def:nicedefinition} hold.
The facts that $\cC \subset \cO$ and that $V$ is locally Lipschitz relative to $\cC$ are straightforward. Condition \eqref{eq:piecwisepositive} is exactly (L1) of Definition~\ref{def:nicedefinition}.
It only remains to prove (b),(c) and (L2) of Definition~\ref{def:nicedefinition}. For any $x\in \cC$, we consider $\cU(x)\equiv\cO$ and define $\cS(x)\equiv\cS:=\bigcup_{i\in \cI} \inn(\cX_i)\cap \inn(\cC)$. We now prove that $\overline{\cS}\supset \cC$. Consider a point $x\in \cC$,  recalling that $\overline{\inn(\cC)}=\cC$ we consider a sequence $x_k\in \inn(\cC)$ such that $x_k\to x$, as $k\to \infty$. By properties~\ref{enum:b} and~\ref{enum:c} of Definition~\ref{def:patchy}, we have
\[
\overline{\bigcup_{i\in I} \inn(\cX_i)}=\bigcup_{i\in I} \overline{\inn(\cX_i)}=\bigcup_{i\in I} \cX_i\supset\cO\supset\cC\supset\inn(\cC).
\]
Thus, for each $k\in \N$, there exists an auxiliary sequence $x_{k,l}\in \bigcup_{i\in I} \inn(\cX_i)\cap \inn(\cC)$ such that $x_{k,l}\to x_k$ as $l\to \infty$.  For each $k\in \N$, we choose a large enough $l=l_k$ so that $x_{k,l_k}\to x$ as $k\to\infty$. By construction, $x_{k,l_k}\in \bigcup_{i\in\cI}\inn(\cX_i)\cap \inn(\cC)$, proving the density of $\cS$ in $\cC$ (and thus (b) of Definition~\ref{def:nicedefinition}).
Now, for each $y\in \cS$ there exists $i\in \cI$ such that $y\in \inn(\cX_i)$ and since by condition~\ref{enum:d}, $V$ coincides with $V_i$ in the open set $\inn(\cX_i)$, we have $\nabla V(y)=\nabla V_i(y)$ proving condition (c) of Definition~\ref{def:nicedefinition}. Finally, by condition \eqref{eq:piecewiseFlowcond} we obtain (L2) of Definition~\ref{def:nicedefinition}, concluding the proof that $V\in \cLL(\cA,\cC)$. Then UGAS follows from Theorem~\ref{theorem:MainTheo}. The case with $\rho\equiv 0$ is completely analogous. \qed
 \end{proof}
We prove that the family of proper piecewise $\cC^1$ functions is closed under the pointwise maximum and pointwise minimum operators. 

\begin{proposition}\label{lemma:Maxmin}
Consider an open set $\cO\subset\R^n$. The class of proper piecewise $\cC^1$ functions on $\cO$ is closed under the operations of pointwise maximum and pointwise minimum of finitely many functions. More precisely, given  $V_1,\dots, V_K:\cO\to \R$ proper piecewise $\cC^1$ functions on $\cO$, the functions $V_{M}, V_m:\cO\to \R$ defined by
\[
\begin{aligned}
V_M(x)&:=\max_{i=1,\dots K}\{V_i(x)\},\\
V_m(x)&:=\min_{i=1,\dots K}\{V_i(x)\},
\end{aligned}
\;\;\;\;\;\;\forall \;x\in \cO,
\]
are proper piecewise $\cC^1$ on $\cO$.
\end{proposition}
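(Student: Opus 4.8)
The plan is to prove closure under pointwise maximum; the minimum case is entirely analogous (replace $\max$ by $\min$ and the "active" index condition $V_i(x)=V_M(x)$ by $V_i(x)=V_m(x)$), or alternatively it follows from the maximum case via $V_m=-\max_i(-V_i)$ together with the trivial observation that negating a proper piecewise $\cC^1$ function yields a proper piecewise $\cC^1$ function. So fix proper piecewise $\cC^1$ functions $V_1,\dots,V_K$ on $\cO$; for each $j\in\{1,\dots,K\}$ let $\cI_j=\{1,\dots,K_j\}$, closed regular-closed sets $\{\cX^j_i\}_{i\in\cI_j}$, open sets $\{\cO^j_i\}_{i\in\cI_j}$ and $\cC^1$ maps $V^j_i:\cO^j_i\to\R$ be the data witnessing Definition~\ref{def:patchy} for $V_j$. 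Continuity of $V_M$ is immediate since a finite maximum of continuous functions is continuous.

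First I would build the candidate data for $V_M$. The natural index set is the product $\cI:=\cI_1\times\cdots\times\cI_K$, so an index is a tuple $\mathbf{i}=(i_1,\dots,i_K)$. For such $\mathbf{i}$ set $\cO_{\mathbf{i}}:=\bigcap_{j=1}^K \cO^j_{i_j}$ (open) and define on it the $\cC^1$ candidate selection function $V_{\mathbf{i}}:=\max_{j=1,\dots,K} V^j_{i_j}$ — wait, this is not $\cC^1$, so instead I must be more careful: the correct construction is to refine the product index so that on each piece a single $V^j_{i_j}$ is selected. Concretely, for each tuple $\mathbf{i}\in\cI$ and each $k\in\{1,\dots,K\}$ define the closed set
\[
\cX_{\mathbf{i},k}:=\Big(\bigcap_{j=1}^K \cX^j_{i_j}\Big)\cap\Big\{x\in\cO\;\Big\vert\; V^k_{i_k}(x)=\max_{j=1,\dots,K}V^j_{i_j}(x)\Big\}\cap \overline{\inn\Big(\big\{x\in\cO : V_k(x)=V_M(x)\big\}\Big)},
\]
which is closed (intersection of a closed set, the closed set where a continuous function attains a maximum of finitely many continuous functions, and a closure), with $V_M=V^k_{i_k}$ on it, and take $\cO_{\mathbf{i},k}:=\cO_{\mathbf{i}}$ with $V_{\mathbf{i},k}:=V^k_{i_k}|_{\cO_{\mathbf{i}}}$. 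Then conditions \ref{enum:a} (domain containment: on $\cX_{\mathbf{i},k}\cap\cO$ we are inside $\bigcap_j\cX^j_{i_j}\cap\cO\subset\bigcap_j\cO^j_{i_j}=\cO_{\mathbf{i}}$) and \ref{enum:d} ($V_M=V^k_{i_k}=V_{\mathbf{i},k}$ on $\cX_{\mathbf{i},k}$) are immediate, and \ref{enum:c} (covering of $\cO$) follows because every $x\in\cO$ lies in some $\cX^j_{i_j}$ for each $j$ by \ref{enum:c} applied to each $V_j$, hence in $\bigcap_j\cX^j_{i_j}$ for a suitable tuple, and among the finitely many $V^j_{i_j}(x)$ some $k$ attains the max, while $x$ lies in one of the finitely many closed sets $\overline{\inn\{V_k=V_M\}}$ whose union is $\cO$ (this last union equality is the one genuinely needing a density/regularity argument, sketched below).

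The main obstacle is verifying the regular-closed property \ref{enum:b}, i.e.\ $\overline{\inn(\cX_{\mathbf{i},k})}=\cX_{\mathbf{i},k}$. The inclusion $\overline{\inn(\cX_{\mathbf{i},k})}\subset\cX_{\mathbf{i},k}$ is automatic since $\cX_{\mathbf{i},k}$ is closed. For the reverse, the key geometric fact is that each point $x$ of $\cC^1$-piecewise structure where $V_M=V_k$ and $V_k=V^k_{i_k}$ in the "essentially active" sense lies in the closure of the open set where \emph{all three} strict/equality conditions defining $\cX_{\mathbf{i},k}$ hold simultaneously with room to spare: one uses that $\inn(\cX^k_{i_k})$ is dense in $\cX^k_{i_k}$ (regular-closedness of the $V_j$-data, \ref{enum:b} for $V_k$), that $\inn\{V_k=V_M\}$ is dense in $\overline{\inn\{V_k=V_M\}}$ by construction, and that these two dense open sets, being ``coherently chosen'' (the index $i_k$ is essentially active for $V_k$ near $x$, and $k$ is essentially active for $V_M$ near $x$), have intersecting interiors accumulating at $x$ — this is a covering/Baire-type argument showing $\bigcup_{\mathbf{i},k}\inn(\cX_{\mathbf{i},k})$ is dense in $\cO$, which I would establish by discarding at the outset all ``inessential'' indices (replacing $\cI_j$ by the essentially active indices à la \eqref{eq:activeindexset}, which by property (c) of piecewise $\cC^1$ functions loses nothing) and then noting that on the open dense set where $V_M$ is differentiable and $\nabla V_M=\nabla V_k$ for a unique strictly-dominant $k$ and $x\in\inn(\cX^k_{i_k})$ for a unique essentially active $i_k$, the point sits in $\inn(\cX_{\mathbf{i},k})$ for the corresponding tuple. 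Completing $\mathbf{i}$ in the remaining coordinates $j\ne k$ by picking any essentially active $i_j$ with $x\in\cX^j_{i_j}$ (possible by \ref{enum:c}, and harmless since those coordinates only shrink the open neighborhood $\cO_{\mathbf{i}}$, not the defining conditions, once we note $x\in\inn(\bigcap_j\cX^j_{i_j})$ can be arranged by further refining on a dense open set); density of such $x$ in $\cO$ then yields \ref{enum:b}. Finally one discards the (finitely many) tuples $(\mathbf{i},k)$ for which $\cX_{\mathbf{i},k}$ has empty interior, keeping a valid proper piecewise $\cC^1$ witnessing family for $V_M$; the argument for $V_m$ is identical after replacing maxima by minima throughout.
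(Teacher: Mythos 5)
Your overall architecture (reduce $\min$ to $\max$ via negation, build product indices, select the maximizing branch) is reasonable, but the verification of the regular-closedness condition \ref{enum:b} --- which you correctly identify as the main obstacle --- does not go through as written. Two specific problems. First, your pieces $\cX_{\mathbf{i},k}$ are intersections of several \emph{closed} sets, and such intersections of regular-closed sets are not regular-closed in general; your closing moves do not repair this: proving that $\bigcup_{\mathbf{i},k}\inn(\cX_{\mathbf{i},k})$ is dense in $\cO$ says nothing about whether an \emph{individual} $\cX_{\mathbf{i},k}$ equals the closure of its own interior, and discarding the pieces with empty interior does not help either, since a piece with nonempty interior can still have a lower-dimensional ``whisker'' outside $\overline{\inn(\cX_{\mathbf{i},k})}$. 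Second, the intermediate claim that the relevant sets ``have intersecting interiors accumulating at $x$'' because each accumulates at $x$ is false in general: two open sets each containing $x$ in their closure need not intersect near $x$ (think of $(-1,0)$ and $(0,1)$ at $x=0$). This is exactly the trap the construction must avoid.

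The paper's proof sidesteps both issues by a different decomposition. It first reduces to $K=2$ (iterating $\max\{a,b,c\}=\max\{a,\max\{b,c\}\}$, so the $K$-ary product bookkeeping is unnecessary), partitions $\cO$ into the \emph{open} sets $\cY_0=\inn\{V_1=V_2\}$, $\cY_1=\{V_1>V_2\}$, $\cY_2=\{V_1<V_2\}$, and defines each new piece as $\overline{\cX^j_i\cap\cY_m}$, i.e.\ the closure of a regular-closed set intersected with an \emph{open} set. Such sets are automatically regular-closed via the identity $\overline{\inn(\overline{\cX\cap\cU})}=\overline{\cX\cap\cU}$ for $\cX$ regular-closed and $\cU$ open (if $x\in\cX\cap\cU$, approximate $x$ by points of $\inn(\cX)$, which eventually lie in the open set $\cU$). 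Openness of one of the two factors is precisely what makes the density argument work, and it is the ingredient missing from your pieces. Your construction could likely be salvaged by replacing each $\cX_{\mathbf{i},k}$ with $\overline{\inn(\cX_{\mathbf{i},k})}$ and then carefully re-establishing the covering condition \ref{enum:c}, but that re-establishment is the hard part and is only gestured at (``Baire-type'', ``coherently chosen'') in your writeup; as it stands the proof of \ref{enum:b} is a genuine gap.
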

\begin{proof}[Sketch of the Proof]
We note that if $V$ is proper piecewise $\cC^1$ so is $W:=-V$. Moreover, by definition of pointwise maximum and pointwise minimum operators, $\max\{a,b,c\}=\max\{a,\max\{b,c\}\}$ and $\min\{a,b\}=-\max\{-a,-b\}$ for all $a,b,c\in \R$. Therefore, it suffices to prove that, given $V_1,V_2:\cO\to \R$ proper piecewise $\cC^1$ functions, the max function $V_M:\cO\to \R$ defined by
\[
V_M(x):=\max\{V_1(x),V_2(x)\},\;\;\forall \;x\in \cO,
\]
is proper piecewise $\cC^1$ on $\cO$, since the general statement follows iterating this property.
Toward this end, let us consider $V_1,V_2:\cO\to\R$ proper piecewise $\cC^1$ functions.
For both $j=1,2$, we can consider $\cI^j=\{1,\dots, K_j\}$, closed sets $\{\cX^j_i\}_{i\in \cI^j}$, open sets $\{\cO^j_i\}_{i\in \cI^j}$  and continuously differentiable functions $V^j_i:\cO^j_i \to \R$, $i \in \cI^j$, such that Definition~\ref{def:patchy} is satisfied for $V_j$, $j=1,2$.
Define the open sets
\[
\begin{aligned}
\cY_0&:=\inn(\{x\in \cO\;\vert\;V_1(x)=V_2(x)\}),\\
\cY_1&:=\{x\in \cO\;\vert\;V_1(x)>V_2(x)\},\\
\cY_2&:=\{x\in \cO\;\vert\;V_1(x)<V_2(x)\}.
\end{aligned}
\]
Let us take $\cI^M:=\{1,\dots, K_1,K_1+1,\dots,2K_1,2K_1+1,\dots 2K_1+K_2\}$ and define 
\[
\cX_i^M:=\begin{cases}
\overline{\cX^1_i\cap \cY_0},\;\;\;\;&\text{if }\;1\leq i\leq K_1,\\
\overline{\cX^1_{i-K_1}\cap \cY_1},\;\;\;\;&\text{if }\;K_1+1\leq i\leq 2K_1,\\
\overline{\cX^2_{i-2K_1}\cap \cY_2},\;\;\;\;&\text{if }2K_1+1\leq i\leq 2K_1+K_2,
\end{cases}
\]
and
\[
(\cO_i^M,V^M_i):=\begin{cases}
(\cO^1_i, V^1_i)\;\;\;\;&\text{if }\;1\leq i\leq K_1,\\
(\cO^1_{i-K_1},V^1_{i-K_1})\;\;\;\;&\text{if }\;K_1+1\leq i\leq 2K_1,\\
(\cO^2_{i-2K_1},V^2_{i-2K_1})\;\;\;\;&\text{if }2K_1+1\leq i\leq 2K_1+K_2.
\end{cases}
\]
It can be shown that $V_M$, $\cI^M$, $\{\cX_i^M\}_{i\in \cI^M}$, $\{\cO_i^M\}_{i\in \cI^M}$ and $\{V^M_i\}_{i\in \cI^M}$ satisfy Definition~\ref{def:patchy}. Conditions~\ref{enum:a} and~\ref{enum:d} are straightforward; Condition~\ref{enum:b} follows from the fact that given $\cX\subset \R^n$ such that $\cX=\overline{\inn(\cX)}$ and an open set $\cU\subset \R^n$, one has
\[
\overline{\inn({\overline{\cX\cap \cU}})}=\overline{\cX\cap \cU}.
\]
Condition~\ref{enum:c} holds recalling that $\cup_{i\in \cI^1}\inn(\cX_i^1)$ and $\cup_{i\in \cI^2}\inn(\cX_i^2)$ are dense in $\cO$ and  $\cO\subset \cY_0\cup\overline{\cY_1}\cup\overline{\cY_2}$.\qed
\end{proof}
\begin{remark}
Let $\cO\subset\R^n$ be an open set, and consider $V_1,\dots, V_K:\cO\to \R$ proper piecewise $\cC^1$ functions. We consider a \emph{max-min} function $V_{\Mm}:\cO\to \R$ defined by 
\begin{align}\label{eq:Mmm}
V_{\Mm}(x)&:=\max_{j \in \{1, \dots, J\}} \left \{\min_{k \in S_j} \{V_k(x)\} \right \},\;\;\forall\,x\in \cO
\end{align}
where $J\geq1$ and $S_1, \dots, S_J\subset \{1, \dots, K\}$ are non-empty subsets, see also \cite{dellarossa19} and references therein for a thorough discussion about this family of functions. Iterating the result in Proposition~\ref{lemma:Maxmin}, it trivially holds that $V_{\Mm}$ is proper piecewise $\cC^1$ on $\cO$. Remarkably, any piecewise \emph{affine} function (PWA) can be represented in the form~\eqref{eq:Mmm} with $V_1,\dots V_K:\R^n\to \R$ affine functions, that is the so-called lattice representation, see~\cite{XuVanDer10} and references therein.\hfill$\triangle$
\end{remark}

\section{Application to hybrid dynamical systems}\label{sec:HybridSystems}

A broad framework where constrained differential inclusions~\eqref{eq:DiffIntroduction} appear is \emph{hybrid dynamical systems},~\cite{GoebSanf12}. Given $\cC,\cD\subset \R^n$, $F:\dom F\rightrightarrows \R^n$, $G:\dom G\rightrightarrows \R^n$, such that $\cC\subset\dom F$ and $\cD \subset \dom G$, a hybrid dynamical system $\mathcal{H}=(\cC,\cD,G,F)$ is
\begin{equation}\label{eq:hybsys}
\mathcal{H}:\;
\begin{cases}
\dot x\in F(x),\;\;&\;x\in \cC,\\
x^+\in G(x),\;\;&\;x\in \cD.
\end{cases}
\end{equation}
For all relevant definitions, of a solution to \eqref{eq:hybsys} and of stability concepts invoked below, 
we refer to \cite{GoebSanf12}.

\subsection{Stability Conditions using $\cLL(\cA,\cC)$}

An extension of Theorem~\ref{theorem:MainTheo} to hybrid systems is as follows. 

\begin{theorem}\label{proposition:StabHybrid}
Given hybrid system~\eqref{eq:hybsys}, suppose that $F:\dom F\rightrightarrows \R^n$ is locally bounded and inner semicontinuous with $\cC\subset \dom F$. Given a closed set $\cA$, suppose that
$V:\dom V\to\R$ is such that 
\begin{itemize} 
  \item[(a)] $V\in \cLL(\cA,\cC)$; 
  \item[(b)] $V$ is a discrete-time Lyapunov function for $\cA$ in $\cD$, in the sense that $\dom V\supset \cD\cup G(\cD)$ and there exist  $\alpha_1,\alpha_2\in \cK_\infty$, $\rho \in \cPd$  satisfying
\begin{subequations}
\begin{eqnarray}\label{eq:jumpcond}
  \alpha_1(|x|_\cA)\leq V(x)\leq\alpha_2(|x|_\cA),\;\;\;&\forall\,x\in \cD\cup G(\cD)\label{eq:BoundsJump}\\
V(g)-V(x)\leq-\rho(|x|_\cA) \;\;\;\;\; &\forall x\in \cD,\ g\in G(x). \label{eq:DecreadeHybridJump}
\end{eqnarray}
\end{subequations}
\end{itemize} 
Then $\cA$ is UGAS for hybrid system \eqref{eq:hybsys}. Moreover, if $V\in \cLL^\circ(\cA;\cC)$ and condition (b) is satisfied with $\rho\equiv 0$, then $\cA$ is UGS for hybrid system \eqref{eq:hybsys}.
\end{theorem}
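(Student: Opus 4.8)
The plan is to combine the flow decrease already established in Theorem~\ref{theorem:MainTheo} with the jump decrease in hypothesis~(b), and then invoke the standard Lyapunov theorem for hybrid systems, namely \cite[Theorem 3.18]{GoebSanf12}. The only obstacle is that \cite[Theorem 3.18]{GoebSanf12} is stated for a Lyapunov function that is continuously differentiable (or at least continuous and locally Lipschitz) on an open neighborhood of $\overline{\cC}$, whereas our $V\in\cLL(\cA,\cC)$ need only have piecewise structure relative to $\cC$ and may fail to be even continuous off $\cC$ (recall Example~\ref{ex:CircleExample}). Hence I cannot cite that theorem verbatim; instead I will re-run its (short) proof using the infinitesimal decrease along flows that Theorem~\ref{theorem:MainTheo} provides, together with the algebraic decrease at jumps.

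\textbf{Step 1: Bounds on $V$ along solutions.} First I would observe that hypotheses (L1) of $\cLL(\cA,\cC)$ and \eqref{eq:BoundsJump} together give $\alpha_1(|x|_\cA)\le V(x)\le\alpha_2(|x|_\cA)$ for all $x\in\cC\cup\cD\cup G(\cD)$ (after replacing $\alpha_1$ by the pointwise minimum and $\alpha_2$ by the pointwise maximum of the two pairs of class-$\cK_\infty$ functions, which are again class $\cK_\infty$). Likewise, after replacing the two functions $\rho$ by their pointwise minimum, I get a single $\rho\in\cPd$ that works both for the flow decrease in Theorem~\ref{theorem:MainTheo} and for \eqref{eq:DecreadeHybridJump}.

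\textbf{Step 2: Decrease of $t\mapsto V(\phi(t,j))$ along hybrid arcs.} Let $\phi$ be any solution to $\mathcal H$, defined on a hybrid time domain $\dom\phi$. On each flow interval $[t_j,t_{j+1}]\times\{j\}$ the map $t\mapsto\phi(t,j)$ is a solution to the constrained differential inclusion \eqref{eq:DiffIntroduction}, so Theorem~\ref{theorem:MainTheo} yields $\frac{d}{dt}V(\phi(t,j))\le-\rho(|\phi(t,j)|_\cA)$ for almost every $t$ in that interval; integrating, $V(\phi(t_{j+1},j))\le V(\phi(t_j,j))$, with strict decrease quantified by the time spent flowing. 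At each jump time $(t_{j+1},j)\to(t_{j+1},j+1)$, hypothesis (b) gives $V(\phi(t_{j+1},j+1))\le V(\phi(t_{j+1},j))-\rho(|\phi(t_{j+1},j)|_\cA)\le V(\phi(t_{j+1},j))$. Chaining these, $t\mapsto V(\phi(t,j))$ is nonincreasing along the solution.

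\textbf{Step 3: From decrease to UGAS.} With Steps 1--2 in hand, the argument is exactly the finite-dimensional comparison/$\cK_\infty$-bookkeeping used to prove \cite[Theorem 3.18]{GoebSanf12}: the sandwich bounds give UGS directly (any solution satisfies $|\phi(t,j)|_\cA\le\alpha_1^{-1}(V(\phi(0,0)))\le\alpha_1^{-1}(\alpha_2(|\phi(0,0)|_\cA))$), and the strict decrease along both flows and jumps, quantified by $\rho$, rules out solutions that neither flow nor jump enough to decay; a standard argument (e.g.\ \cite[Proposition 3.27]{GoebSanf12}, or a direct contradiction argument using that the decrease is bounded away from zero on $\{r\ge|x|_\cA\ge\varepsilon\}$) upgrades this to UGA, hence UGAS. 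For the weak case $V\in\cLL^\circ(\cA,\cC)$ with $\rho\equiv0$ in (b), Steps 1--2 still give that $t\mapsto V(\phi(t,j))$ is nonincreasing, which with the sandwich bounds is precisely UGS. I expect the only mildly delicate point to be making sure the flow part of Step~2 is legitimate, i.e.\ that the restriction of a hybrid solution to a flow interval really is an element of $\cS_{F,\cC}$ so that Theorem~\ref{theorem:MainTheo} applies; this is immediate from the definition of a hybrid solution in \cite{GoebSanf12}, so no genuine obstacle arises.
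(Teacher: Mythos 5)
Your proposal is correct and follows essentially the same route as the paper: restrict the hybrid arc to each flow interval, apply Theorem~\ref{theorem:MainTheo} there, use hypothesis~(b) at jumps, merge the class-$\cK_\infty$ bounds and the $\rho$'s (exactly as in Remark~\ref{rmk:weakercond}), and then conclude by the bookkeeping of the proof of \cite[Theorem 3.18]{GoebSanf12}. Your explicit remark that the cited theorem cannot be invoked verbatim because $V$ need not be continuous off $\cC$, so its proof must be re-run with the decrease supplied by Theorem~\ref{theorem:MainTheo}, is a point the paper leaves implicit.
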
 

\begin{proof}
Consider any solution of~\eqref{eq:hybsys} $\psi:\dom \psi\to \R^n$. Consider any $j\in \N$ such that the $j$-th interval of flow, $I^j$, has nonempty interior. In this case, considering the restriction  $\psi(\cdot,j):I^j\to \cC$, we have $\psi(\cdot,j)\in \cS_{F,\cC}$. Applying Theorem~\ref{theorem:MainTheo} we have that $V\circ \psi(\cdot, j):I^j\to \R$ is strictly decreasing, with the rate determined by $\rho$ in \eqref{eq:LyapunovDecrease}. Conditions~\eqref{eq:jumpcond} guarantee the decrease  of $V\circ \psi$ during jumps of $\psi$, with the rate determined by $\rho$ in \eqref{eq:jumpcond}. Thus $\cA$ is UGAS,  following the same steps as in the proof of \cite[Theorem 3.18]{GoebSanf12}. The case with $V\in \cLL^\circ(\cA;\cC)$ and $\rho\equiv 0$ ensuring UGS of $\cA$ for system~\eqref{eq:hybsys} is straightforward. \qed
\end{proof}

\begin{remark}\label{rmk:weakercond}
There is no loss of generality in using the same class $\cK_\infty$ functions $\alpha_1,\alpha_2$ for positive definiteness in $\cC$ and in $\cD\cup G(\cD)$: if they were different, one considers the point-wise minimum for the lower bound and the point-wise maximum for the upper bound. The same reasoning applies to $\rho$.
Thus merging (L1) of Definition \ref{def:nicedefinition} and~\eqref{eq:BoundsJump} of Theorem \ref{proposition:StabHybrid} yields $\alpha_1,\alpha_2\in \cK_\infty$ such that 
\begin{equation}\label{eq:LyapUnifiedBound}
\alpha_1(|x|_\cA)\leq V(x)\leq\alpha_2(|x|_\cA)\;\;\forall x\in\cC\cup \cD\cup G(\cD),
\end{equation}
consistently with the hypothesis of~\cite[Theorem 3.18]{GoebSanf12}.\hfill $\triangle$
\end{remark}

\subsection{Homogeneous Hybrid Dynamics}
In this section we study a class of hybrid dynamical systems given by
\begin{equation}\label{eq:HomHybridSys}
\begin{cases}
\dot x=A_F x,\,\;\;\;\;\;x\in \cC=\{x\in \R^n\,\vert\,x^\top Q_F x\geq 0\},\\
x^+=A_Jx,\; \;\; x\in \cD=\{x\in \R^n\,\vert\,x^\top Q_J x\geq 0\},
\end{cases}
\end{equation}
where $A_F,A_J\in \R^{n\times n}$ and $Q_F,Q_J\in \Sym(\R^n):=\{S\in \R^{n\times n}\;\vert\;S=S^\top\}$. Such \eqref{eq:HomHybridSys} satisfies the \emph{hybrid basic conditions} defined in~\cite[Assumption 6.5]{GoebSanf12}. Moreover, $\cC$ and $\cD$ are symmetric cones, that is, if $x\in \cC$ ($x\in \cD$ resp.) then $\lambda x\in \cC$ ($\lambda x\in \cD$ resp.), for all $\lambda \in \R$. Noting that the flow and jump maps are linear, system~\eqref{eq:HomHybridSys} is \emph{homogeneous  with respect to the standard dilation}, as defined in~\cite{GoebTeel10} (see also~\cite[Chapter 9]{GoebSanf12}). Consistently with the converse result in~\cite{TunaTeel06}, we consider (proper piecewise $\cC^1$) candidate Lyapunov functions \emph{homogeneous of degree 2}; in particular we have the following definition.
\begin{definition}[Proper Piecewise Quadratic Functions]
A proper piecewise $\cC^1$ function $V:\R^n\to \R$ (recall Definition~\ref{def:patchy}) is a \emph{proper piecewise quadratic function} ($V\in \cPQ(\R^n))$ if 
\begin{itemize}[leftmargin=*]
\item For each $i\in \cI$, there exists $R_i\in \Sym(\R^n)$ such that $\cX_i=\{x\in \R^n\;\vert\;x^\top R_i x\geq 0\}$;
\item For each $i\in \cI$ there exists $P_i\in\Sym(\R^n)$ such that $V_i(x):=x^\top P_i x$, for all $x\in \R^n$.\hfill$\triangle$
\end{itemize} 
By definition of $\{\cX_i\}_{i\in \cI}$ and $\{V_i\}_{i\in \cI}$, a proper piecewise quadratic function is in particular \emph{even} and \emph{homogeneous of degree 2}, that is,
\[
V(\lambda x)=\lambda^2 V(x),\;\;\forall \,\lambda\in \R,\;\;\forall\;x\in \R^n.
\]
\end{definition}
With this definition, we now state a useful corollary of Theorem~\ref{proposition:StabHybrid} in the context of hybrid dynamical systems~\eqref{eq:HomHybridSys}.
\begin{corollary}\label{corollary:HomogeneusSystems}
Consider system~\eqref{eq:HomHybridSys}, with $Q_F$ not negative semi-definite. Suppose that there exist $V\in \cPQ$ and $\lambda_1,\lambda_2>0$  such that
\begin{equation}\label{eq:HomogeneLyapBounds}
\lambda_1|x|^2\leq V(x)\leq \lambda_2 |x|^2, \;\;\;\forall\;x\in \cC\cup\cD\cup A_J(\cD),
\end{equation}
and, for all $x\in \R^n$, for all $i\in \cI$, it holds that
\begin{equation}\label{eq:HomogLyapunFlow}
x^\top Q_Fx> 0\;\wedge\;x^\top R_ix> 0\;\Rightarrow x^\top P_iA_Fx <0.
\end{equation}
Moreover, suppose that for all $x\in \R^n$, for all $(j,i)\in \cI\times \cI$, it holds that
\begin{equation}\label{eq:HomgLyapuJump}
\begin{aligned}
x^\top Q_Jx\geq 0\;\wedge\;x^\top R_jx&\geq 0\;\wedge\; x^\top A_J^\top R_i A_Jx\geq 0\\&\Downarrow\\ x^\top A_JP_i A_Jx&-x^\top P_j x <0.
\end{aligned}
\end{equation}
Then the origin is UGAS for hybrid system~\eqref{eq:HomHybridSys}.
\end{corollary}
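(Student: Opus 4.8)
The strategy is to verify the hypotheses of Theorem~\ref{proposition:StabHybrid} with $\cA = \{0\}$, relying on Theorem~\ref{proposition:piecwiseprop} to establish $V\in\cLL(\{0\},\cC)$ from the pointwise (indexwise) flow condition. First I would record that the flow set $\cC = \{x\mid x^\top Q_F x\geq 0\}$ is regular-closed: since $Q_F$ is not negative semi-definite, there exists $\bar x$ with $\bar x^\top Q_F\bar x>0$, the set $\{x\mid x^\top Q_F x>0\}$ is open, nonempty, and contained in $\inn(\cC)$, and a standard scaling/continuity argument (any $x$ with $x^\top Q_F x=0$ is a limit of points $x+\epsilon\bar x$ lying in that open set, for suitable sign of $\epsilon$) gives $\overline{\inn(\cC)}=\cC$. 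This is the hypothesis $\overline{\inn(\cC)}=\cC$ needed in Theorem~\ref{proposition:piecwiseprop}.

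Next I would translate~\eqref{eq:HomogLyapunFlow} into~\eqref{eq:piecewiseFlowcond}. For $x\in\inn(\cX_i)\cap\inn(\cC)$ we have $x^\top R_i x>0$ and $x^\top Q_F x>0$, so~\eqref{eq:HomogLyapunFlow} yields $x^\top P_i A_F x<0$, i.e. $\tfrac12\inp{\nabla V_i(x)}{A_F x} = x^\top P_i A_F x<0$ (using $\nabla V_i(x)=2P_i x$ and symmetry of $P_i$). To obtain the class-$\cPd$ decrease rate $\rho$ required in~\eqref{eq:piecewiseFlowcond}, I would exploit homogeneity: the function $x\mapsto x^\top P_i A_F x$ is homogeneous of degree $2$ and, being negative on the (conic) set $\{x^\top R_i x>0,\ x^\top Q_F x>0\}$, its supremum over the compact slice $\{|x|=1\}\cap\overline{\{x^\top R_i x\geq 0,\ x^\top Q_F x\geq 0\}}$... — here one must be slightly careful, because the closure adds boundary points where strict negativity is not asserted. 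The clean fix is to take $\rho(s):= c\, s^2$ with $c>0$ chosen as a common bound: on $\inn(\cX_i)\cap\inn(\cC)$ restricted to $|x|=1$ the quantity $-x^\top P_i A_F x$ is positive and, by homogeneity, it suffices to find $c>0$ with $-2x^\top P_iA_F x\geq c|x|^2$ there. If $\inf$ over that open slice is $0$, a limiting sequence $x_k\to x_*$ with $x_*^\top R_i x_*\geq0$, $x_*^\top Q_F x_*\geq 0$, $|x_*|=1$ and $x_*^\top P_iA_F x_*=0$ would arise; one then observes this case is harmless since either $x_*$ is not a flowing point (not in $\cC$) and can be dropped, or one strengthens the reading of~\eqref{eq:HomogLyapunFlow} — I expect the intended argument simply invokes homogeneity to pass from the strict pointwise inequality on the open cone to $\rho(s)=cs^2$ on that cone, which is exactly~\eqref{eq:piecewiseFlowcond} after noting $|x|_{\{0\}}=|x|$. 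Then Theorem~\ref{proposition:piecwiseprop} gives $V\in\cLL(\{0\},\cC)$, with~\eqref{eq:piecwisepositive} being precisely~\eqref{eq:HomogeneLyapBounds} restricted to $\cC$ (with $\alpha_1(s)=\lambda_1 s^2$, $\alpha_2(s)=\lambda_2 s^2$).

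It remains to check part~(b) of Theorem~\ref{proposition:StabHybrid}, the discrete-time Lyapunov condition on $\cD$. The bound~\eqref{eq:BoundsJump} on $\cD\cup G(\cD)=\cD\cup A_J(\cD)$ is exactly the remaining content of~\eqref{eq:HomogeneLyapBounds}. For the jump decrease~\eqref{eq:DecreadeHybridJump}, fix $x\in\cD$, so $x^\top Q_J x\geq 0$; the post-jump point is $g=A_J x$. Since $V$ is proper piecewise $\cC^1$ with $\cup_i\cX_i\supset\R^n$, there is $j\in\cI$ with $x\in\cX_j$, hence $V(x)=x^\top P_j x$ and $x^\top R_j x\geq 0$; and there is $i\in\cI$ with $A_J x\in\cX_i$, hence $V(A_J x)=x^\top A_J^\top P_i A_J x$ and $x^\top A_J^\top R_i A_J x\geq 0$. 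These are precisely the three antecedents of~\eqref{eq:HomgLyapuJump}, which then gives $x^\top A_J^\top P_i A_J x - x^\top P_j x<0$, i.e. $V(A_J x)-V(x)<0$. Homogeneity again upgrades this strict inequality, uniform over the compact set $\{|x|=1\}\cap\cD$ and over the finitely many pairs $(j,i)$, to $V(A_J x)-V(x)\leq -\rho(|x|)$ with the same quadratic $\rho$ (enlarging $c$ if necessary). All hypotheses of Theorem~\ref{proposition:StabHybrid} now hold with $\cA=\{0\}$, so the origin is UGAS for~\eqref{eq:HomHybridSys}. The main obstacle is the passage from the strict, pointwise, open-cone inequalities~\eqref{eq:HomogLyapunFlow}–\eqref{eq:HomgLyapuJump} to the class-$\cPd$ (here quadratic) decrease rates demanded by Theorems~\ref{proposition:piecwiseprop} and~\ref{proposition:StabHybrid}; this is handled by a compactness argument on the unit sphere combined with degree-$2$ homogeneity, and one must take care that the indexing (which $j$ contains $x$, which $i$ contains $A_J x$) is consistent between the value of $V$ and the membership conditions.
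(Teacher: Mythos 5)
Your proposal follows the same route as the paper's (very brief) proof: establish $\overline{\inn(\cC)}=\cC$ from $Q_F$ not being negative semi-definite, feed~\eqref{eq:HomogLyapunFlow} into~\eqref{eq:piecewiseFlowcond} to get $V\in\cLL(\{0\},\cC)$ via Theorem~\ref{proposition:piecwiseprop}, read~\eqref{eq:HomgLyapuJump} as~\eqref{eq:DecreadeHybridJump}, and conclude with Theorem~\ref{proposition:StabHybrid}. You supply considerably more detail than the paper does, and the indexing argument for the jump condition (choose $j$ with $x\in\cX_j$ and $i$ with $A_Jx\in\cX_i$ so that $V(x)=x^\top P_jx$ and $V(A_Jx)=x^\top A_J^\top P_iA_Jx$) is exactly what is intended.

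The one place where you hesitate is a genuine issue, and you should not wave it away. For the jump condition the antecedents of~\eqref{eq:HomgLyapuJump} are non-strict, so the set $\{|x|=1\}\cap\{x^\top Q_Jx\geq0,\ x^\top R_jx\geq0,\ x^\top A_J^\top R_iA_Jx\geq0\}$ is compact, the continuous function $x\mapsto x^\top A_J^\top P_iA_Jx-x^\top P_jx$ is strictly negative on it, and compactness plus degree-$2$ homogeneity does deliver a uniform quadratic rate; that half of your argument closes cleanly. For the flow condition the antecedents of~\eqref{eq:HomogLyapunFlow} are strict, so the region on which negativity of $x^\top P_iA_Fx$ is guaranteed is an \emph{open} cone and the same compactness argument does not apply: the infimum of $-x^\top P_iA_Fx$ over the open unit-sphere slice can be zero, attained in the limit at a point of $\bd(\cC)$ (which, contrary to your suggested escape, still belongs to $\cC$ and is reachable by solutions), and in that case no $\rho\in\cPd$ satisfies~\eqref{eq:piecewiseFlowcond}. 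A related small point: $\inn(\cX_i)\cap\inn(\cC)$ need not be contained in $\{x^\top R_ix>0\}\cap\{x^\top Q_Fx>0\}$ (take $R_i\succeq0$ singular, so $\cX_i=\R^n$), so~\eqref{eq:HomogLyapunFlow} does not even cover all points where~\eqref{eq:piecewiseFlowcond} must hold. Both issues are present in the paper's own one-line justification of this step, which simply asserts that~\eqref{eq:HomogLyapunFlow} implies~\eqref{eq:piecewiseFlowcond}; to make the corollary airtight one should either strengthen~\eqref{eq:HomogLyapunFlow} to have non-strict antecedents and a uniform conclusion $x^\top P_iA_Fx\leq-c|x|^2$, or verify the uniform bound directly in the application (as effectively happens in the Clegg example). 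So: same approach as the paper, correctly executed except for this step, and the step you flagged is indeed the weak point rather than something you overlooked.
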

\begin{proof}[Sketch of the proof]
The hypothesis of $Q_F$ being not negative semi-definite ensures that $\overline{\inn(\cC)}=\cC=\{x\in \R^n\;\vert\;x^\top Q_F x\geq 0\}$, as required in Theorem~\ref{proposition:piecwiseprop}.
It is clear that~\eqref{eq:HomogeneLyapBounds} implies~\eqref{eq:LyapUnifiedBound} in Remark~\ref{rmk:weakercond}. Implication~\eqref{eq:HomogLyapunFlow} ensures  that $V\in \cLL(\{0\};\cC)$, since it implies condition~\eqref{eq:piecewiseFlowcond} of Theorem~\ref{proposition:piecwiseprop}.  Implication~\eqref{eq:HomgLyapuJump} ensures~\eqref{eq:DecreadeHybridJump}. Applying Theorem~\ref{proposition:StabHybrid} we conclude that $\{0\}$ is UGAS for system~\eqref{eq:HomHybridSys}.\qed
\end{proof}
\begin{remark}
We note that condition~\eqref{eq:HomogeneLyapBounds} does \emph{not} necessary imply $P_i>0$ for all $i\in \cI$. In fact it is sufficient to ensure that the functions $V_i(x)=x^\top P_i x$ are positive definite in their region of activation $\cX_i=\{x\in \R^n\;\vert\;x^\top R_i x\geq 0\}$, for each $i\in \cI$. The overall function $V$ then satisfies bounds as in~\eqref{eq:LyapUnifiedBound}; the quadratic bounds in~\eqref{eq:HomogeneLyapBounds} can be obtained by homogeneity of $V$.  Moreover, Corollary~\ref{corollary:HomogeneusSystems} is particularly appealing because conditions \eqref{eq:HomogeneLyapBounds},~\eqref{eq:HomogLyapunFlow} and~\eqref{eq:HomgLyapuJump} could be reduced, via S-Procedure, as a system of LMIs, but paying a price in terms of conservatism, see for example~\cite[Section 2.6.3]{Boyd94}. \hfill $\triangle$
\end{remark}

\begin{example}[Clegg Integrator]\label{example:clegg}
The Clegg integrator connected to an integrating plant has been shown to overcome intrinsic limitations of linear feedback \cite{BekHol01}, (see also \cite{PriQui18}). More specifically, using the $\varepsilon$-regularization suggested in \cite{NesTeel2011}, we focus on the hybrid closed-loop
\begin{equation}\label{eq:clegg}
\begin{cases}
\dot x=A_F x,\,\;\;\;\;\;x\in \cC=\{x\in \R^2\,\vert\,x^\top Q x\geq 0\},\\
x^+=A_Jx,\; \;\; x\in \cD=\{x\in \R^2\,\vert\,x^\top Q x\leq 0\},
\end{cases}
\end{equation}
with \[
A_F=\begin{bmatrix}
\quad 0 & \quad 1 \\{-1 }& \quad0
\end{bmatrix},\;\;A_J=\begin{bmatrix}
1 & 0 \\0 & 0
\end{bmatrix},\;\;Q=\begin{bmatrix}
1 & -\frac{1}{2 \varepsilon} \\-\frac{1}{2 \varepsilon} & 0
\end{bmatrix},
\]
and $\varepsilon>0$ being a small regularization parameter.

Following \cite{ZacNes11}, there does not exist a \emph{quadratic} Lyapunov function. In fact, given any symmetric and positive definite matrix
$P=\begin{bmatrix}
p_{11} & p_{12}\\p_{12} & p_{22}
\end{bmatrix}$,
and considering points $z_1=(-1,0)^\top$ and $z_2=(0,1)^\top$, we have that $z_1,z_2\in \cC$ and the Lyapunov inequalities
$z_1^\top P A_Fz_1<0$ and $z_2^\top P A_Fz_2<0$
would imply $p_{12}<0$ and $p_{12}>0$ respectively and hence, a contradiction.
UGAS of \eqref{eq:clegg} was established with nonconvex numerical piecewise quadratic constructions in \cite{ZacNes11}, and then via a nonconvex analytic construction in \cite{NesTeel2011}. More generally, various locally Lipschitz and piecewise quadratic Lyapunov functions for~\eqref{eq:clegg} satisfying~\eqref{eq:ClarkeLyapunov} are proposed in~\cite{ZacNesTee05},~\cite{NesZac05} and~\cite{NesTeel2011}. In these works, to avoid the complications of having non-differentiable points at the boundary of $\cC$, a different patching-technique is adopted. In particular, the common approach of these results requires considering an ``inflated counterpart'' of the flow set $\cC$, paying the price of a more convoluted analysis. To provide simplified analysis, we illustrate Corollary~\ref{corollary:HomogeneusSystems} by building three proper piecewise quadratic Lyapunov functions, one of them convex. For further details, see our conference paper~\cite{DelGoe19}. In what follows, we fix $\varepsilon=0.1$, but the functions that we construct work for any $\varepsilon$ such that $0<\varepsilon\leq0.1$.

\emph{Max of 2 sign-indefinite quadratics.} Let 
 \begin{equation}\label{eq:CleggLyapFunc}
V_M(x):=\max\{x^\top P_1 x, x^\top P_2x\}, 
\end{equation}
\[
\text{with}\;P_1=\begin{bmatrix}
1 & -0.1 \\-0.1 & 0.5
\end{bmatrix}
 \;,\;
P_2=\begin{bmatrix}
2.5& \;1.4\\1.4 & \;0.5
\end{bmatrix}.
\]
Note that $P_2$ is not positive definite (see also \cite{LinLi17}).
It can be seen that all the conditions of Corollary~\ref{corollary:HomogeneusSystems} are satisfied by $V_M$ and we can conclude that $\cA=\{0\}$ is UGAS for system \eqref{eq:clegg} as shown in \cite{DelGoe19}. See Fig. \ref{fig:clegg}(a) for a graphical representation of our construction, where nonconvexity of $V_M$ emerges from the fact that $P_2$ is not sign-definite.

\emph{Mid of quadratics.} Consider the symmetric matrices
\[
P_1=\begin{bmatrix}
1 & \;0.25 \\0.25 & \;0.7
\end{bmatrix}
,\;\;
P_2=\begin{bmatrix}
0.55& \;-0.2\\-0.2 & \;0.25
\end{bmatrix} ,\;\;
P_3=\begin{bmatrix}
\frac{25}{16}& \;\frac{49}{160}\\\star& \;0.25
\end{bmatrix} 
\]
and consider the function
\begin{equation}\label{eq:cleggMIdLyap}
\begin{aligned}
V_{\text{mid}}(x)&:=\text{mid}\{V_1(x),V_2(x),V_3(x)\}\\
&:=\max\{\min\{V_1,V_2\},\min\{V_2,V_3\},\min\{V_1,V_3\}\},
\end{aligned}
\end{equation}
where $V_i(x):=x^\top P_i x$, that is a max-min function defined  as in~\eqref{eq:Mmm}. Intuitively, the  \virgolette{mid} operator selects the function whose value lies between the two others ones. Again, it can be seen that all the conditions of Corollary~\ref{corollary:HomogeneusSystems} are satisfied and we conclude again that $\cA=\{0\}$ is UGAS for the system \eqref{eq:clegg}. See Fig. \ref{fig:clegg}(b) for a graphical representation of our construction, which shows again nonconvex level sets of $V_{\text{mid}}$.
\begin{figure}
  \centering
  \begin{tabular}{@{}c@{}}
    \includegraphics[width=.325\linewidth,height=140pt]{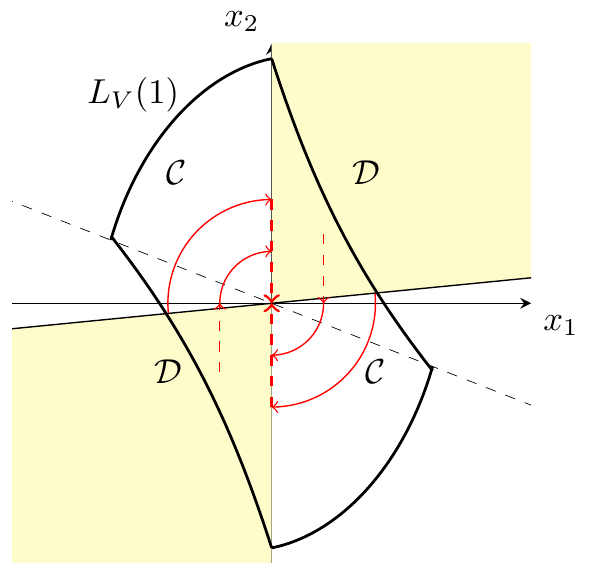} \\[\abovecaptionskip]
    {\small (a) $V_M$ defined in \eqref{eq:CleggLyapFunc}.}
  \end{tabular}
  \begin{tabular}{@{}c@{}}
    \includegraphics[width=.325\linewidth,height=140pt]{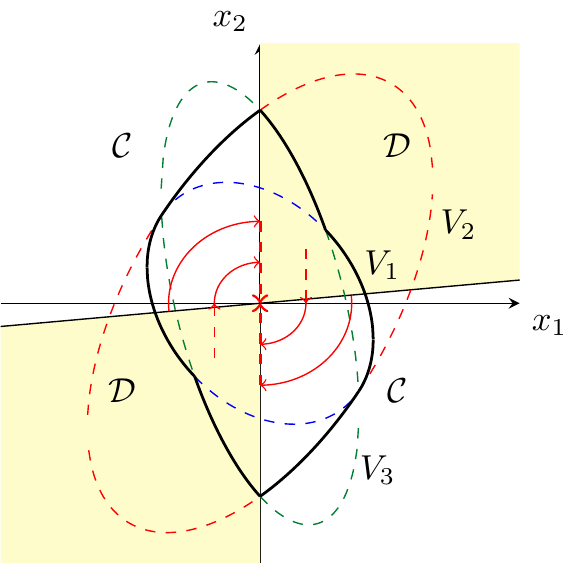} \\[\abovecaptionskip]
    \small (b) $V_{\text{mid}}$ defined in \eqref{eq:cleggMIdLyap}.
  \end{tabular}
\begin{tabular}{@{}c@{}}
    \includegraphics[width=.325\linewidth,height=140pt]{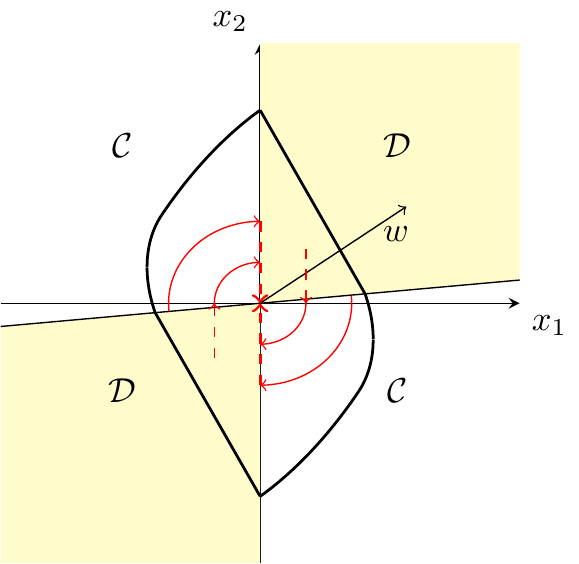} \\[\abovecaptionskip]
    \small (c) $V_{\text{conv}}$ defined in \eqref{eq:ConvCleggFun}.
    \end{tabular}
\caption{Level sets of the constructed Lyapunov functions for system \eqref{eq:clegg}; in red some  particular solutions.}
    \label{fig:clegg}
\end{figure}

\emph{Convex Lyapunov function.} The Lyapunov functions above are both nonconvex. We construct here a convex one, starting from $V_{\text{mid}}$. Looking at the level set $L_{V_{\text{mid}}}(1)$, the idea is to connect the points of intersection of $L_{V_{\text{mid}}}(1)$ with the two lines that form the boundary of $\cD$ using a straight line. We thus define
\begin{equation}\label{eq:ConvCleggFun}
V_{\text{conv}}(x)=\begin{cases}
V_{\text{mid}}(x),\;\;&\text{if }x\in \cC,\\
\inp{w}{x}^2,\;\;&\text{if }x\in \cD,
\end{cases}
\end{equation}
where $w=(0.9574,0.7071)^\top$ is a vector tangent to the line of interest, suitably scaled to ensure continuity. This function satisfies the conditions of Corollary~\ref{corollary:HomogeneusSystems} from the properties of $V_{\text{mid}}$. It is represented in Fig. \ref{fig:clegg}(c).

It is easy to see that for all three Lyapunov functions $V_M,V_{mid},V_{conv}:\R^2\to \R$, the Clarke conditions 
in~\eqref{eq:ClarkeLyapunov} are not satisfied on the line $\cR:=\{x=(x_1,x_2)^\top \in \R^2\;\vert\;x_1=0\}\subset \bd(\cC)$. More precisely, for all $x\in \cR$, 
\[
\exists v\in \partial V(x)\;\text{such that}\; v^\top A_Fx>0,
\]
where $V$ denotes any of the Lyapunov functions $V_M,V_{mid}$ or $V_{conv}$.
The observation that~\eqref{eq:ClarkeLyapunov} may fail to hold when patching $V$ at the boundary of $\cC$ had been already made in~\cite[Remark 8]{NesTeel2011}, where a more convoluted patching is adopted.
Our Theorem~\ref{proposition:StabHybrid} (and Corollary~\ref{corollary:HomogeneusSystems}) provides a useful answer to that concern, showing that even though~\eqref{eq:ClarkeLyapunov} cannot be used to establish UGAS with the intuitive patching on $\bd(\cC)$, UGAS can still be concluded from the results of this paper.\hfill $\triangle$
\end{example}

\section{Conclusions}
In this work, we studied the problem of stability for a class of differential inclusions, with a particular interest in applications to hybrid dynamical systems. We provided sufficient Lyapunov conditions for a particular class of piecewise-defined locally Lipschitz functions, requiring to check the Lyapunov decrease inequality only on a dense subset of a given domain of interest $\cC$. We then studied the relations between our approach and the existing literature on locally Lipschitz Lyapunov functions, showing that our conditions are in general less restrictive than the Clarke gradient-based conditions.
We finally applied our approach in the context of hybrid dynamical systems, with particular care to the homogeneous case. Several examples are provided to show the novelty and the usefulness of our results.

\bibliographystyle{plain}
\bibliography{biblio}

\end{document}